\newtheorem{theorem}{Theorem}[section]
\newtheorem{proposition}[theorem]{Proposition}
\numberwithin{equation}{section}
\begin{document}
\title{Multiple Askey-Wilson polynomials and related basic hypergeometric multiple orthogonal polynomials}
\author{Jean Paul Nuwacu and Walter Van Assche \\
Universit\'e du Burundi and KU Leuven, Belgium}
\date{\today}
\maketitle

\begin{abstract}
We first show how one can obtain Al-Salam--Chihara polynomials, continuous dual $q$-Hahn polynomials, and Askey--Wilson polynomials from the little $q$-Laguerre and the little $q$-Jacobi polynomials by using special transformations. This procedure is then extended to obtain multiple Askey--Wilson, multiple continuous dual
$q$-Hahn, and multiple Al-Salam--Chihara polynomials from the multiple little $q$-Laguerre and the multiple
little $q$-Jacobi polynomials. 
\end{abstract}

\section{Introduction}   \label{sec:intro}
In this paper we will first show in Section \ref{sec:2} how some families of basic hypergeometric polynomials are related by a linear transformation.
This transformation is a $q$-analogue of the Fourier--Jacobi transform that maps Jacobi polynomials to Wilson polynomials
\cite{Koornwinder}. We will consider three families of basic hypergeometric polynomials: the Al-Salam--Chihara polynomials,
the continuous dual $q$-Hahn polynomials and the Askey--Wilson polynomials, and show how they can be obtained by a linear transformation from the little $q$-Laguerre and the little $q$-Jacobi polynomials. We then extend this
procedure to multiple orthogonal polynomials in Section \ref{sec:3}.
 We will first recall multiple little $q$-Laguerre polynomials in Section \ref{sec:3.1}
and multiple little $q$-Jacobi polynomials in Section \ref{sec:3.2} and then apply the linear transformations to obtain
multiple Al-Salam--Chihara polynomials (Section \ref{sec:3.3}), multiple continuous dual $q$-Hahn polynomials (Section \ref{sec:3.4}) and
finally multiple Askey-Wilson polynomials (Section \ref{sec:3.5}).

\subsection{Basic hypergeometric orthogonal polynomials}
Al-Salam--Chihara polynomials $p_n(x;a,b|q)$ satisfy the orthogonality
\[   \int_{-1}^1 p_n(x;a,b|q) p_m(x;a,b|q) w(\theta;a,b|q) \frac{dx}{\sqrt{1-x^2}} = 0, \qquad n \neq m, \]
where $x = \cos \theta$ and
\begin{equation}   \label{AlSalChiw}
    w(\theta;a,b|q) = \frac{(e^{2i\theta},e^{-2i\theta};q)_\infty}
          {(ae^{i\theta},ae^{-i\theta},be^{i\theta},be^{-i\theta};q)_\infty}, 
\end{equation}
with parameters $a,b$ satisfying $|a|,|b| < 1$, see \cite[\S 14.8]{KLS}, \cite[\S 15.1]{Ismail}. They are given by
\begin{equation}   \label{AlSalChi}
   p_n(x;a,b|q) = {}_3\phi_2 \left( \left.\begin{array}{c} q^{-n}, ae^{i\theta}, ae^{-i\theta} \\
                                                        ab, 0 \end{array} \right| q, q \right) .  
\end{equation}

Continuous dual $q$-Hahn polynomials $p_n(x;a,b,c|q)$ satisfy the orthogonality relations
\[  \int_{-1}^1 p_n(x;a,b,c|q) p_m(x;a,b,c|q) w(\theta;a,b,c|q) \frac{dx}{\sqrt{1-x^2}} = 0, \qquad n \neq m, \]
where $x = \cos \theta$ and
\begin{equation}   \label{qHahnw}
    w(\theta;a,b,c|q) = \frac{(e^{2i\theta},e^{-2i\theta};q)_\infty}
          {(ae^{i\theta},ae^{-i\theta},be^{i\theta},be^{-i\theta},ce^{i\theta},ce^{-i\theta};q)_\infty}, 
\end{equation}
with parameters $a,b,c$ satisfying $|a|,|b|,|c| < 1$. They have the basic hypergeometric expression
\begin{equation}  \label{dualqHahn}
  p_n(x;a,b,c|q) = {}_3\phi_2 \left( \left. \begin{array}{c} q^{-n}, ae^{i\theta}, ae^{-i\theta} \\ ab,ac \end{array}
               \right| q,q \right). 
\end{equation}

Askey--Wilson polynomials $p_n(x;a,b,c,d|q)$ satisfy the orthogonality relations
\[   \int_{-1}^1 p_n(x;a,b,c,d|q) p_m(x;a,b,c,d|q) w(\theta;a,b,c,d|q) \frac{dx}{\sqrt{1-x^2}} = 0, \qquad n \neq m, \]
where $x = \cos \theta$ and
\begin{equation}  \label{AWw}
    w(\theta;a,b,c,d|q) = \frac{(e^{2i\theta},e^{-2i\theta};q)_\infty}
          {(ae^{i\theta},ae^{-i\theta},be^{i\theta},be^{-i\theta},ce^{i\theta},ce^{-i\theta},
de^{i\theta},de^{-i\theta};q)_\infty}, 
\end{equation}
with parameters $a,b,c,d$ satisfying $|a|,|b|,|c|,|d| < 1$, \cite[\S 14.1]{KLS}, \cite[\S 15.2]{Ismail}. They are given by
\begin{equation}   \label{AW}
   p_n(x;a,b,c,d|q) = {}_4\phi_3 \left( \left.\begin{array}{c} q^{-n}, abcdq^{n-1}, ae^{i\theta}, ae^{-i\theta} \\
                                                        ab, ac, ad \end{array} \right| q, q \right) .  
\end{equation}
These three families are connected and are in fact all Askey--Wilson polynomials, since 
\begin{itemize}
  \item $p_n(x;a,b,c,0|q) =  p_n(x;a,b,c|q)$,
  \item $p_n(x;a,b,0,0|q) = p_n(x;a,b|q) = p_n(x;a,b,0|q)$.
  \item $a^n H_n(x;a|q) = p_n(x;a,0,0,0|q)$. 
\end{itemize}  
The latter polynomials $H_n(x;a|q)$ are the continuous big $q$-Hermite polynomials which we will use in Section \ref{sec:qHahn}. 
We have normalized the polynomials so that $p_n(0;a,b,c,d|q)=p_n(0;a,b,c|q)=p_n(0;a,b|q)=1$.
Observe that
\begin{equation}  \label{Ak}
  A_k(x) = (ae^{i\theta},ae^{-i\theta};q)_k = \prod_{j=0}^{k-1} (1+a^2q^{2j}-2aq^j x)  
\end{equation}
is a polynomial of degree $k$, so the polynomials \eqref{AlSalChi}, \eqref{dualqHahn}, and \eqref{AW} are expressed as a linear combination of these polynomials $A_k(x)$. 

Two other families of basic hypergeometric orthogonal polynomials that we will encounter are orthogonal with respect to a discrete measure supported
on the $q$-lattice $\{ q^k, k =0,1,2,\ldots\}$. They are the little $q$-Laguerre polynomials $q_n(x;a|q)$ \cite[\S 14.20]{KLS} for which $0 < aq < 1$ and
\begin{equation}  \label{qLagorth}
  \sum_{k=0}^\infty  q_n(q^k;a|q) q_m(q^k;a|q) \frac{(aq)^k}{(q;q)_k} = 0, \qquad m \neq n,
\end{equation}
and the little $q$-Jacobi polynomials $q_n(x;a,b|q)$ \cite[\S 14.12]{KLS}, for which $0 < aq < 1$, $bq < 1$ and
\begin{equation}   \label{qJacorth}
  \sum_{k=0}^\infty q_n(q^k;a,b|q) q_m(q^k;a,b|q) \frac{(aq)^k (bq;q)_k}{(q;q)_k} = 0, \qquad m \neq n.  
\end{equation}
They are given by 
\begin{equation}   \label{qLagsum}
   q_n(x;a|q) = {}_2\phi_1 \left( \left. \begin{array}{c} q^{-n} , 0 \\ aq  \end{array} \right| q, qx \right)
     = \sum_{k=0}^n \frac{(q^{-n};q)_k}{(aq;q)_k(q;q)_k} (qx)^k,
\end{equation}
and
\begin{equation}  \label{qJacsum}
 q_n(x;a,b|q) = {}_2\phi_1 \left( \left. \begin{array}{c} q^{-n} , abq^{n+1} \\ aq  \end{array} \right| q, qx \right)
     = \sum_{k=0}^n \frac{(q^{-n};q)_k(abq^{n+1};q)_k}{(aq;q)_k(q;q)_k} (qx)^k.
\end{equation}
Here we used the normalization $q_n(0;a|q)=1=q_n(0;a,b|q)$.

\subsection{Multiple orthogonal polynomials}
Multiple orthogonal polynomials are polynomials in one variable that have orthogonality conditions with respect to 
several measures. There are two types of multiple orthogonal polynomials, but in this paper we only consider type II
multiple orthogonal polynomials. Let $r \geq 1$ be a positive integer and $(\mu_1,\ldots,\mu_r)$ positive measures
on the real line for which all the moments exist. We will use multi-indices $\vec{n}=(n_1,n_2,\ldots,n_r) \in \mathbb{N}^r$ and denote their size by $|\vec{n}| = n_1+n_2+\cdots+n_r$. Type II multiple orthogonal
polynomials $P_{\vec{n}}(x)$ for the multi-index $\vec{n}$ are monic polynomials of degree $|\vec{n}|$ that satisfy
the orthogonality conditions
\[   \int P_{\vec{n}}(x) x^k\, d\mu_j(x) = 0, \qquad 0 \leq k \leq n_j-1, \]
for $1 \leq j \leq r$. This gives a system of $|\vec{n}|$ homogeneous equations for the $|\vec{n}|$ unknown coefficients of $P_{\vec{n}}$. If the solution exists and if it is unique, then we say that $\vec{n}$ is a normal index. See \cite[Ch. 23]{Ismail}, \cite[\S 4.3]{NikiSor}, \cite{Apt}, \cite{MarVA} for a background on multiple orthogonal polynomials.

During the past few decades, various examples of multiple orthogonal polynomials with classical weights have been worked out. 
Often one can take the orthogonality measures for classical orthogonal polynomials and by allowing $r$ different parameters one gets $r$ measures with respect to which one can look for the corresponding multiple orthogonal polynomials, see, e.g., \cite{AptBrVA, ArCVA, VAC}. Some of these `classical' multiple orthogonal polynomials play an important role in applications, e.g., multiple Hermite polynomials and multiple Laguerre polynomials are used in the analysis of random matrices \cite{BlKuijl1, BlKuijl2, Kuijl1} or special determinantal processes \cite{Kuijl2}, multiple Jacobi polynomials and
multiple little $q$-Jacobi polynomials are used in irrationality proofs \cite{PoVA2, SmetVA, SmetVA2}, multiple Charlier and multiple Meixner polynomials
are used to describe  non-Hermitian oscillator Hamiltonians \cite{MTVZ, MVZ, FrVA}, and in general multiple orthogonal polynomials they are useful in the analysis of multidimensional Schr\"odinger equations and the multidimensional Toda lattice \cite{AptDMVA, AptDVA}.

Beckermann et al. \cite{BCVA} worked out the most general family of classical multiple orthogonal polynomials by giving the multiple Wilson polynomials.
These Wilson polynomials are on top of the Askey table \cite[p.~183]{KLS} and from this family one can move to other families of classical multiple orthogonal polynomials by taking limits. They used a transformation (the Fourier-Jacobi transform) that maps Jacobi polynomials to Wilson polynomials (Koornwinder 
\cite{Koornwinder}) and showed that this transform allows to generate multiple Wilson polynomials from certain multiple Jacobi polynomials
(the Jacobi-Pi\~neiro polynomials). In this paper we will look at the $q$-analogue of the Askey table \cite[p.~413]{KLS}. Some multiple $q$-orthogonal polynomials have already been obtained, such as the multiple little $q$-Jacobi polynomials \cite{PoVA}, multiple $q$-Charlier polynomials \cite{ArRaAb} and multiple $q$-Hahn polynomials \cite{Arvesu}. On top of the $q$-analogue of the Askey table are the Askey--Wilson polynomials and the $q$-Racah polynomials.
In this paper we will obtain multiple Askey--Wilson polynomials (Section \ref{sec:3.5}) by use of a linear transformation that maps little $q$-Jacobi polynomials to Askey--Wilson polynomials. We will also obtain multiple continuous dual $q$-Hahn polynomials (Section \ref{sec:3.4}) and multiple Al-Salam--Chihara polynomials (Section \ref{sec:3.3}) using the transform that maps little $q$-Laguerre polynomials to continuous dual $q$-Hahn polynomials and Al-Salam--Chihara polynomials. to achieve this, we first work out the multiple little $q$-Laguerre polynomials in Section \ref{sec:3.1}
and the multiple little $q$-Jacobi polynomials in Section \ref{sec:3.2}.

\section{A mapping between basic hypergeometric polynomials}  \label{sec:2}
The Al-Salam--Chihara polynomials and the Askey--Wilson polynomials are most naturally expressed in the basis
$\{ A_k(x); k=0,1,2,\ldots \}$ of polynomials, given in \eqref{Ak}. Let us also consider the polynomials
\[   B_k(x) = (be^{i\theta},be^{-i\theta};q)_k = \prod_{j=0}^{k-1} (1+b^2q^{2j}-2bq^j x), \]
then the orthogonality of the Al-Salam--Chihara polynomials is equivalent to
\[   \frac{1}{2\pi} \int_0^\pi p_n(x;a,b|q) (be^{i\theta},be^{-i\theta};q)_j w(\theta;a,b|q)\, d\theta = 0, \qquad
         0 \leq j \leq n-1,  \]
and the orthogonality of the Askey--Wilson polynomials is
\[   \frac{1}{2\pi} \int_0^\pi p_n(x;a,b,c,d|q) (be^{i\theta},be^{-i\theta};q)_j w(\theta;a,b,c,d|q)\, d\theta = 0, \qquad
         0 \leq j \leq n-1.  \]
We can express these polynomials, up to a multiplicative factor, as a determinant:
\begin{equation}  \label{det}
   p_n(x) = C_n \det \begin{pmatrix}  m_{0,0} & m_{0,1} & m_{0,2} & \cdots & m_{0,n}  \\
                                        m_{1,0} & m_{1,1} & m_{1,2} & \cdots & m_{1,n}  \\
                                        m_{2,0} & m_{2,1} & m_{2,2} & \cdots & m_{2,n}  \\
                                        \vdots & \vdots & \vdots & \cdots & \vdots \\
                                        m_{n-1,0} & m_{n-1,1} & m_{n-1,2} & \cdots & m_{n-1,n} \\
                                        A_0(x) & A_1(x) & A_2(x) & \cdots & A_n(x)   \end{pmatrix}  ,
\end{equation}
where the $m_{k,j}$ are modified moments
\begin{eqnarray*}
     m_{k,j}  &=& \frac{1}{2\pi} \int_{-1}^{1} A_j(x) B_k(x) w(\theta) \frac{dx}{\sqrt{1-x^2}} \\
   &=&
   \frac{1}{2\pi} \int_0^\pi (a^{i\theta},ae^{-i\theta};q)_j (be^{i\theta},be^{-i\theta};q)_k w(\theta)\, d\theta .
\end{eqnarray*} 
Indeed, if we integrate then
\[  \frac{1}{2\pi}  \int_{-1}^1 p_n(x) B_k(x) w(\theta)\, \frac{dx}{\sqrt{1-x^2}}
      = C_n \det \begin{pmatrix}  m_{0,0} & m_{0,1} & m_{0,2} & \cdots & m_{0,n}  \\
                                        m_{1,0} & m_{1,1} & m_{1,2} & \cdots & m_{1,n}  \\
                                        m_{2,0} & m_{2,1} & m_{2,2} & \cdots & m_{2,n}  \\
                                        \vdots & \vdots & \vdots & \cdots & \vdots \\
                                        m_{n-1,0} & m_{n-1,1} & m_{n-1,2} & \cdots & m_{n-1,n} \\
                                        m_{k,0} & m_{k,1} & m_{k,2} & \cdots & m_{k,n}   \end{pmatrix}  \]
and this is zero when $0 \leq k \leq n-1$. If
\begin{equation}   \label{Dn}    D_n = \det  \begin{pmatrix} m_{0,0} & m_{0,1} & m_{0,2} & \cdots & m_{0,n}  \\
                                        m_{1,0} & m_{1,1} & m_{1,2} & \cdots & m_{1,n}  \\
                                        m_{2,0} & m_{2,1} & m_{2,2} & \cdots & m_{2,n}  \\
                                        \vdots & \vdots & \vdots & \cdots & \vdots \\
                                        m_{n,0} & m_{n,1} & m_{n,2} & \cdots & m_{n,n}
                                         \end{pmatrix}  ,
\end{equation}
and $C_n^{-1} = (-2a)^n q^{n(n-1)/2} D_{n-1}$, then $p_n(x)$ defined in \eqref{det} is a monic polynomial. 

\subsection{Al-Salam--Chihara polynomials and little $q$-Laguerre polynomials}   \label{sec:AlSalChi}

The modified moments for Al-Salam--Chihara polynomials can be computed using the integral
\begin{equation}   \label{AlSalChi-int}
    \frac{1}{2\pi} \int_0^\pi \frac{(e^{2i\theta},e^{-2i\theta};q)_\infty}{(ae^{i\theta},ae^{-i\theta},be^{i\theta},be^{-i\theta};q)_\infty}\, d\theta
     = \frac{1}{(q;q)_\infty (ab;q)_\infty}  
\end{equation}
(see \cite[Eq. (15.1.1)]{Ismail}). One easily finds
\begin{eqnarray*}
    m_{k,j} &=& \frac{1}{2\pi} \int_0^\pi (ae^{i\theta},ae^{-i\theta};q)_j (be^{i\theta},be^{-i\theta};q)_k w(\theta;a,b|q)\, d\theta \\
            &=&  \frac{1}{2\pi} \int_0^\pi w(\theta;aq^j,bq^k|q)\, d\theta \\
            &=&  \frac{1}{(q;q)_\infty (abq^{k+j};q)_\infty} = \frac{(ab;q)_{k+j}}{(q;q)_\infty (ab;q)_\infty},
\end{eqnarray*}
so that $m_{k,j} = c_{k+j}$ and $D_n$ given in \eqref{Dn} is a Hankel determinant. The sequence $(c_n)_{n \in \mathbb{N}}$ can be identified as the moments
of a discrete measure. Indeed, by the $q$-binomial theorem \cite[\S 1.3]{GR}
\begin{equation}  \label{q-binom}
   \sum_{k=0}^\infty \frac{(a;q)_k}{(q;q)_k} z^k = \frac{(az;q)_\infty}{(z;q)_\infty}, \qquad |z|<1, 
\end{equation}
we see that (for $a=0$)
\[   \sum_{k=0}^\infty q^{kn} \frac{(ab)^k}{(q;q)_k} = \frac{1}{(abq^n;q)_\infty} = \frac{(ab;q)_n}{(ab;q)_\infty} = c_n (q;q)_\infty, \]
so that 
\[    c_n = \int x^n\, d\mu(x), \]
for the discrete measure $\mu$ on the $q$-lattice $\{q^k; k \in \mathbb{N}\}$ for which
\[   \int_0^1 f(x)\, d\mu(x) = \frac{1}{(q;q)_\infty} \sum_{k=0}^\infty f(q^k) \frac{(ab)^k}{(q;q)_k} .  \]
This is the orthogonality measure for the little $q$-Laguerre polynomials, see \eqref{qLagorth}.
From this we have the following result:

\begin{theorem}  \label{thm:2.1}
Let $T_a: f \mapsto T_a(f)$ be the linear transformation that acts on polynomials as 
\[   T_a(x^k) = (ae^{i\theta},ae^{-i\theta};q)_k.  \]
Then the Al-Salam--Chihara polynomials $p_n(x;a,b|q)$ and little $q$-Laguerre polynomials $q_n(x;a|q)$ are connected by
\[    p_n(x;a,b|q) =  T_a q_n(x;ab/q|q).  \]
\end{theorem}

\begin{proof}
Orthogonal polynomials are given in terms of the moments $c_n$ of their orthogonality measure by the determinant \cite[Eq. (2.1.6)]{Ismail}
\[     p_n(x) =  C_n \det \begin{pmatrix} c_0 & c_1 & c_2 & \cdots & c_n \\
                                       c_1 & c_2 & c_3 & \cdots & c_{n+1} \\
                                       c_2 & c_3 & c_4 & \cdots & c_{n+2} \\
                                       \vdots & \vdots & \vdots & \cdots & \vdots \\
                                       c_{n-1} & c_n & c_{n+1} & \cdots & c_{2n-1} \\
                                        1 & x & x^2 & \cdots & x^n
                   \end{pmatrix} , \]
where $C_n$ is a constant which fixes the normalization. If we compare this with \eqref{det}, then we need to replace every $x^k$ by 
$(ae^{i\theta},ae^{-i\theta};q)_k=A_k(x)$, see \eqref{Ak}. The sequence $(c_n)_{n \in \mathbb{N}}$ contains the moments of the measure
\[ \mu = \frac{1}{(q;q)_\infty} \sum_{k=0}^\infty    \frac{(ab)^k}{(q;q)_k} \delta_{q^k}. \]
Recall that the little $q$-Laguerre polynomials $q_n(x;a|q)$ satisfy the orthogonality relations \eqref{qLagorth},
hence the orthogonal polynomials with moments $(c_n)_{n \in \mathbb{N}}$ are the little $q$-Laguerre polynomials $q_n(x;ab/q|q)$. 
Applying the transformation $T_a$ to the determinantal expression for $q_n(x;ab/q|q)$ then shows that $T_a q_n(x;ab/q|q)$ is proportional
to the Al-Salam--Chihara polynomial $p_n(x;a,b|q)$.
The little $q$-Laguerre polynomials are given by \eqref{qLagsum}
\begin{eqnarray*}
     q_n(x;a|q) &=& {}_2\phi_1\left( \left. \begin{array}{c} q^{-n} , 0 \\ aq \end{array} \right| q,qx \right)  \\
              &=& \sum_{k=0}^n \frac{(q^{-n};q)_k}{(aq;q)_k (q;q)_k} q^k x^k, 
\end{eqnarray*}
hence applying $T_a$ to the polynomial $q_n(x;ab/q|q)$ gives
\begin{eqnarray*}
    T_a q_n(x;ab/q|q) &=&  \sum_{k=0}^\infty \frac{(q^{-n};q)_k}{(ab;q)_k (q;q)_k} q^k (ae^{i\theta};q)_k(ae^{-i\theta};q)_k \\
                       &=&  {}_3\phi_2 \left( \left. \begin{array}{c}  q^{-n}, ae^{i\theta}, ae^{-i\theta} \\ ab, 0  \end{array} \right| q,q \right),
\end{eqnarray*}
and this is indeed the basic hypergeometric expression \eqref{AlSalChi} for the Al-Salam--Chihara polynomial $p_n(x;a,b|q)$.
Therefore the proportionality factor is $1$ and the result follows. 
\end{proof}

The linear transformation $T_a$ can be given explicitly and uses continuous $q$-Hermite polynomials $H_n(x|q)$, which are given in \cite[\S 14.26]{KLS}
\cite[\S 13.1]{Ismail}.
They satisfy the orthogonality
\begin{equation}   \label{qHerorth}
   \frac{1}{2\pi} \int_{-1}^1 H_n(x|q) H_m(x|q) (e^{2i\theta},e^{-2i\theta};q)_\infty \, \frac{dx}{\sqrt{1-x^2}} 
 = \frac{\delta_{m,n}}{(q^{n+1};q)_\infty},  
\end{equation}
and they have the generating function \cite[Eq. (14.26.11)]{KLS} \cite[Thm. 13.1.1]{Ismail} 
\begin{equation}  \label{qHergen}
    \sum_{n=0}^\infty \frac{H_n(x|q)}{(q;q)_n} t^n = \frac{1}{(te^{i\theta},te^{-i\theta};q)_\infty}, \qquad x= \cos \theta. 
\end{equation}

\begin{theorem}   \label{thm:2.2}
The linear transformation $T_a$ that acts on polynomials as
\[   T_a x^k = (ae^{i\theta},ae^{-i\theta};q)_k \]
is given by
\begin{equation}    \label{Ta}
    (T_a f)(x) = (ae^{i\theta},ae^{-i\theta};q)_\infty \sum_{n=0}^\infty f(q^n) a^n \frac{H_n(x|q)}{(q;q)_n} .  
\end{equation}
\end{theorem}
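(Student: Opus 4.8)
The plan is to verify that the formula \eqref{Ta} defines a linear map sending each monomial $x^k$ to $(ae^{i\theta},ae^{-i\theta};q)_k$, since a linear transformation on polynomials is completely determined by its values on the basis $\{x^k\}$. Thus it suffices to substitute $f(x)=x^k$ into the right-hand side of \eqref{Ta} and check that the resulting series sums to $(ae^{i\theta},ae^{-i\theta};q)_k$. Let me sketch the key identity this reduces to.

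Setting $f(x)=x^k$ gives $f(q^n)=q^{nk}$, so I would need to establish
\[
(ae^{i\theta},ae^{-i\theta};q)_\infty \sum_{n=0}^\infty q^{nk} a^n \frac{H_n(x|q)}{(q;q)_n}
= (ae^{i\theta},ae^{-i\theta};q)_k .
\]
The natural tool is the generating function \eqref{qHergen}, which tells us that $\sum_n \frac{H_n(x|q)}{(q;q)_n} t^n = 1/(te^{i\theta},te^{-i\theta};q)_\infty$. With $t=aq^k$ the left-hand series becomes $1/(aq^k e^{i\theta},aq^k e^{-i\theta};q)_\infty$, and multiplying by the prefactor $(ae^{i\theta},ae^{-i\theta};q)_\infty$ produces a ratio of infinite $q$-Pochhammer symbols. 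The plan is then to simplify this ratio using the telescoping identity $(\alpha;q)_\infty/(\alpha q^k;q)_\infty = (\alpha;q)_k$, applied with $\alpha=ae^{i\theta}$ and $\alpha=ae^{-i\theta}$ respectively, which yields exactly $(ae^{i\theta};q)_k(ae^{-i\theta};q)_k = (ae^{i\theta},ae^{-i\theta};q)_k = A_k(x)$ as in \eqref{Ak}.

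**First I would** record the generating function with the substitution $t=aq^k$ to obtain the closed form of the series, then **second** carry out the Pochhammer cancellation to reach $A_k(x)$, and **finally** invoke linearity to extend from monomials to arbitrary $f$. The one point requiring a word of care is the analytic validity of the interchange: the generating function \eqref{qHergen} converges for $|t|<1$, and since $|a|<1$ and $0<q<1$ we have $|aq^k|<1$ for every $k\geq 0$, so the substitution $t=aq^k$ is legitimate and the defining series in \eqref{Ta} converges.

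**The main obstacle** is not any single hard computation but rather confirming that the proposed formula is genuinely well-defined and agrees with $T_a$ on the whole basis uniformly in $k$; once the generating-function substitution is in place, the verification is essentially the telescoping cancellation above. It is worth emphasizing that \eqref{Ta} is presented as an explicit realization of the abstract $T_a$ introduced in Theorem~\ref{thm:2.1}, so the entire content of the proof is this single consistency check on monomials, after which linearity does the rest.
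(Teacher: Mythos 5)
Your proposal is correct and follows essentially the same route as the paper's proof: both reduce the claim to the monomials $x^k$ by linearity, substitute $t=aq^k$ in the generating function \eqref{qHergen}, and cancel the infinite $q$-Pochhammer symbols via $(\alpha;q)_\infty/(\alpha q^k;q)_\infty=(\alpha;q)_k$ to recover $(ae^{i\theta},ae^{-i\theta};q)_k$. Your added remark on the convergence condition $|aq^k|<1$ is a small but legitimate refinement that the paper leaves implicit.
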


\begin{proof}
Obviously the transformation given in \eqref{Ta} is linear, so we only need to check that it acts properly on the monomials $x^k$.
If we take $f(x) = x^k$ in \eqref{Ta} then
\[    T_a x^k = (ae^{i\theta},ae^{-i\theta};q)_\infty \sum_{n=0}^\infty q^{kn} a^n \frac{H_n(x|q)}{(q;q)_n}.  \]
Taking $t=aq^k$ in the generating function \eqref{qHergen} gives
\[   T_a x^k = \frac{(ae^{i\theta},ae^{-i\theta};q)_\infty}{(aq^ke^{i\theta},aq^ke^{-i\theta};q)_\infty} = (ae^{i\theta},ae^{-i\theta};q)_k, \]
which is indeed what we need.
\end{proof}

The transformation $T_a$ has an interesting isometric property, preserving certain inner products.
\begin{proposition}  \label{prop:3.3}
Let $\langle f,g \rangle_{\textup{dis}(a)}$ be the discrete inner product
\[    \langle f,g \rangle_{\textup{dis}(a)} = \frac{1}{(q;q)_\infty} \sum_{n=0}^\infty f(q^n) g(q^n) \frac{a^n}{(q;q)_n} \ , \]
and $\langle u,v \rangle_{\textup{cont}(a,b)}$ be the continuous inner product
\[   \langle u,v \rangle_{\textup{cont}(a,b)} = \frac{1}{2\pi} \int_{-1}^1 u(x)v(x) w(\theta;a,b|q) \frac{dx}{\sqrt{1-x^2}} \ , \]
where $w(\theta;a,b|q)$ is the weight function \eqref{AlSalChiw}. Then
\[    \langle T_af , T_bg \rangle_{\textup{cont}(a,b)} = \langle f,g \rangle_{\textup{dis}(ab)}.  \]
\end{proposition}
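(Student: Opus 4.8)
The plan is to exploit bilinearity. Since $T_a$ and $T_b$ are linear maps and both $\langle\cdot,\cdot\rangle_{\textup{cont}(a,b)}$ and $\langle\cdot,\cdot\rangle_{\textup{dis}(ab)}$ are bilinear in their two slots, it suffices to verify the claimed identity when $f$ and $g$ are monomials. So I would first fix $f(x)=x^j$ and $g(x)=x^k$ and reduce the statement to
\[ \langle T_a x^j,\, T_b x^k \rangle_{\textup{cont}(a,b)} = \langle x^j, x^k \rangle_{\textup{dis}(ab)}, \]
after which the general case follows by expanding arbitrary polynomials in the monomial basis and invoking bilinearity of both sides.

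For the left-hand side I would use the defining action $T_a x^j = (ae^{i\theta},ae^{-i\theta};q)_j = A_j(x)$ and $T_b x^k = (be^{i\theta},be^{-i\theta};q)_k = B_k(x)$. Then $\langle T_a x^j, T_b x^k\rangle_{\textup{cont}(a,b)}$ is \emph{exactly} the modified moment $m_{k,j}$ introduced in Section~\ref{sec:AlSalChi}: the substitution $x=\cos\theta$ (under which $\int_{-1}^1 \frac{dx}{\sqrt{1-x^2}}$ becomes $\int_0^\pi d\theta$, the integrand being even in $\theta$) identifies $\frac{1}{2\pi}\int_{-1}^1 A_j B_k\, w(\theta;a,b|q)\,\frac{dx}{\sqrt{1-x^2}}$ with $\frac{1}{2\pi}\int_0^\pi (ae^{i\theta},ae^{-i\theta};q)_j (be^{i\theta},be^{-i\theta};q)_k\, w(\theta;a,b|q)\,d\theta$. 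I would therefore simply quote the evaluation already performed there, namely $m_{k,j} = (ab;q)_{k+j}/\bigl[(q;q)_\infty (ab;q)_\infty\bigr]$, which rests on the integral \eqref{AlSalChi-int} together with the observation that multiplying $w(\theta;a,b|q)$ by $A_j(x)B_k(x)$ shifts its parameters to $aq^j,bq^k$.

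For the right-hand side I would compute directly from the definition. Since $f(q^n)g(q^n)=q^{n(j+k)}$ and the parameter of $\langle\cdot,\cdot\rangle_{\textup{dis}(ab)}$ is $ab$, we get $\langle x^j,x^k\rangle_{\textup{dis}(ab)} = \frac{1}{(q;q)_\infty}\sum_{n=0}^\infty q^{n(j+k)}\frac{(ab)^n}{(q;q)_n}$. Setting $z=abq^{j+k}$ in the $q$-binomial theorem \eqref{q-binom} with its numerator parameter equal to $0$ collapses the sum to $1/(abq^{j+k};q)_\infty = (ab;q)_{j+k}/(ab;q)_\infty$. Hence the right-hand side equals $(ab;q)_{j+k}/\bigl[(q;q)_\infty (ab;q)_\infty\bigr]$, which coincides with the left-hand side, and the monomial case is settled.

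There is no serious obstacle; the proof is essentially a bookkeeping exercise once one notices that the continuous pairing of $T_a x^j$ against $T_b x^k$ is precisely the modified moment $m_{k,j}$. The only points demanding care are matching the indices correctly (the argument $a$ of $T_a$ pairing with the first parameter slot of $w$, and $b$ of $T_b$ with the second), and keeping the factor $\frac{1}{2\pi}$ and the passage from $\int_{-1}^1\frac{dx}{\sqrt{1-x^2}}$ to $\int_0^\pi d\theta$ consistent so that the continuous inner product literally reproduces $m_{k,j}$ with no stray constant. Everything else amounts to reading the two computations already carried out in Section~\ref{sec:AlSalChi} as the two sides of a single identity rather than as one determinant entry.
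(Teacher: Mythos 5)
Your proof is correct, but it takes a genuinely different route from the paper's. The paper proves the identity by inserting the explicit kernel representation \eqref{Ta} for both $T_af$ and $T_bg$: the prefactors $(ae^{i\theta},ae^{-i\theta};q)_\infty(be^{i\theta},be^{-i\theta};q)_\infty$ cancel the denominator of the Al-Salam--Chihara weight, leaving the continuous $q$-Hermite weight $(e^{2i\theta},e^{-2i\theta};q)_\infty$, and the orthogonality \eqref{qHerorth} of the $H_n(x|q)$ collapses the resulting double sum into the single sum defining $\langle f,g\rangle_{\textup{dis}(ab)}$. You instead reduce to monomials by bilinearity and verify one scalar identity: the continuous pairing of $A_j$ with $B_k$ is the modified moment $m_{k,j}=(ab;q)_{k+j}/\bigl[(q;q)_\infty(ab;q)_\infty\bigr]$, computed from the integral \eqref{AlSalChi-int} after the parameter shift $a\mapsto aq^j$, $b\mapsto bq^k$, while the discrete pairing of $x^j$ with $x^k$ evaluates to the same quantity by the $q$-binomial theorem \eqref{q-binom} with $z=abq^{j+k}$ --- both computations the paper already carried out in Section \ref{sec:AlSalChi} when identifying the Hankel structure of the moments. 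What your route buys: it needs only the defining action of $T_a$ on monomials, not the kernel formula of Theorem \ref{thm:2.2} nor $q$-Hermite orthogonality, and it sidesteps the interchange of infinite summation and integration that the paper performs tacitly, since for polynomials everything reduces to finitely many explicitly evaluated sums and integrals. What the paper's route buys: it displays the structural mechanism (weight cancellation plus orthogonality of the kernel polynomials) that transfers verbatim to Propositions \ref{prop:2.6} and \ref{prop:2.9}, it applies to any $f,g$ for which the expansions converge rather than just polynomials, and it is the form that parallels the Plancherel theorems for the Askey--Wilson function transform cited at the end of Section \ref{sec:2}.
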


\begin{proof}
By using \eqref{Ta} we find
\begin{multline*}
   \langle T_af , T_bg \rangle_{\textup{cont}(a,b)} = \sum_{n=0}^\infty \sum_{m=0}^\infty f(q^n)g(q^m) \frac{a^n b^m}{(q;q)_n (q;q)_m}  \\
  \times  \frac{1}{2\pi} \int_{-1}^1 (ae^{i\theta},ae^{-i\theta},be^{i\theta},be^{-i\theta};q)_\infty w(\theta;a,b|q) H_n(x|q) H_m(x|q) \frac{dx}{\sqrt{1-x^2}}. 
\end{multline*}
The integral simplifies to
\[     \frac{1}{2\pi} \int_{-1}^1 (e^{2i\theta},e^{-2i\theta};q)_\infty  H_n(x|q) H_m(x|q) \frac{dx}{\sqrt{1-x^2}} =\delta_{n,m} \frac{(q;q)_n}{(q;q)_\infty} \ ,  \]
which follows from the orthogonality \eqref{qHerorth} of the continuous $q$-Hermite polynomials. Hence the double sum becomes a single sum and the result follows.
\end{proof}

As a corollary we see that the orthogonality relations of the Al-Salam--Chihara polynomials follow from the orthogonality of the
little $q$-Laguerre polynomials. Indeed, by Theorem \ref{thm:2.1} we have $T_a q_n(x;ab/q|q) = p_n(x;a,b|q)$ and by interchanging $a$ and $b$ 
we also have $T_b q_m(x;ab/q|q) = p_m(x;b,a|q)$ and the latter is equal to $p_m(x;a,b|q)$. Proposition \ref{prop:3.3} then shows that
\begin{multline*}
  \frac{1}{2\pi} \int_{-1}^1 p_n(x;a,b|q)p_m(x;a,b|q) w(\theta;a,b|q)\, d\theta \\
   = \frac{1}{(q;q)_\infty}  \sum_{k=0}^\infty     q_n(q^k;ab/q|q) q_m(q^k;ab/q|q) \frac{(ab)^k}{(q;q)_k}, 
\end{multline*}
and the latter is $0$ whenever $m \neq n$ by the orthogonality \eqref{qLagorth} of the little $q$-Laguerre polynomials.
Clearly the norms of $p_n(x;a,b|q)$ and $q_n(x;ab/q|q)$ are also connected.

\subsection{Askey--Wilson polynomials and little $q$-Jacobi polynomials}   \label{sec:AW}
The modified moments $m_{k,j}$ for the Askey--Wilson weight are given by
\begin{eqnarray*}
     m_{k,j} &=& \frac{1}{2\pi} \int_0^\pi (ae^{i\theta},ae^{-i\theta};q)_j (be^{i\theta},be^{-i\theta};q)_k w(\theta;a,b,c,d|q)\, d\theta \\
            &=&  \frac{1}{2\pi} \int_0^\pi w(\theta;aq^j,bq^k,c,d|q)\, d\theta.  
\end{eqnarray*}
To evaluate this integral we use the Askey--Wilson integral \cite[Eq. (15.2.1)]{Ismail}
\begin{equation}  \label{AW-int}
  \frac{1}{2\pi} \int_0^\pi w(\theta;a,b,c,d|q)\, d\theta = \frac{(abcd;q)_\infty}{(q;q)_\infty (ab,ac,ad,bc,bd,cd;q)_\infty}, \qquad |a|,|b|,|c|,|d|<1,
\end{equation}
which gives
\begin{eqnarray}  \label{modmomAW}
   m_{k,j} &=& \frac{(abcdq^{k+j};q)_\infty}{(q;q)_\infty (abq^{k+j},acq^j,adq^j,bcq^k,bdq^k,cd;q)_\infty}   \nonumber \\ 
           &=& \frac{(abcd;q)_\infty}{(q;q)_\infty (ab,ac,ad,bc,bd,cd;q)_\infty}  \frac{(ab;q)_{k+j} (ac;q)_j (ad;q)_j(bc;q)_k(bd;q)_k}{(abcd;q)_{k+j}}.
  \quad
\end{eqnarray}
This expression contains a Hankel part $c_{k+j}$ depending only on $k+j$, with
\[     c_n = c_0 \frac{(ab;q)_n}{(abcd;q)_n}, \qquad   c_0 =    \frac{(abcd;q)_\infty}{(q;q)_\infty (ab,ac,ad,bc,bd,cd;q)_\infty} .  \]
The sequence $(c_n)_{n \in \mathbb{N}}$ can again be identified as the moments of a discrete measure on the $q$-lattice $\{ q^k : k \in \mathbb{N}\}$.
If we take $z=bq^n$ in the $q$-binomial theorem \eqref{q-binom}, then
\[   \sum_{k=0}^\infty \frac{(a;q)_k}{(q;q)_k} b^k q^{nk} = \frac{(abq^n;q)_\infty}{(bq^n;q)_\infty} = \frac{(ab;q)_\infty}{(b;q)_\infty} \frac{(b;q)_n}{(ab;q)_n}, \]
so that 
\[   c_n = \int_0^1 x^n \, d\nu(x), \qquad  \int_0^1 f(x)\, d\nu(x) = \hat{c}_0 \sum_{k=0}^\infty \frac{(cd;q)_k}{(q;q)_k} (ab)^k f(q^k), \]
where $\hat{c}_0 = 1/(q;ac,ad,bc,bd,cd;q)_\infty$. This is the orthogonality measure for little $q$-Jacobi polynomials,
see \eqref{qJacorth}. We can now prove the following result.

\begin{theorem}   \label{thm:2.4}
Let $T_{a,c,d}: f \mapsto T_{a,c,d}(f)$ be the linear transformation that acts on polynomials as 
\[   T_{a,c,d}(x^k) = \frac{(ae^{i\theta},ae^{-i\theta};q)_k}{(ac;q)_k(ad;q)_k}.  \]
Then the Askey--Wilson polynomials $p_n(x;a,b,c,d|q)$ and little $q$-Jacobi polynomials $q_n(x;a,b|q)$ are connected by
\[    p_n(x;a,b,c,d|q) = T_{a,c,d} q_n(x;ab/q,cd/q|q).  \]
\end{theorem}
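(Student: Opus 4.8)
The plan is to follow the template of Theorem~\ref{thm:2.1}: use the determinantal representation \eqref{det} and exploit the explicit form \eqref{modmomAW} of the modified moments. The key observation is that \eqref{modmomAW} factors as
\[
    m_{k,j} = c_{k+j}\,(ac;q)_j(ad;q)_j\,(bc;q)_k(bd;q)_k ,
\]
where $c_{k+j}=c_0\,(ab;q)_{k+j}/(abcd;q)_{k+j}$ is the Hankel part identified before the theorem. The factor $(ac;q)_j(ad;q)_j$ depends only on the column index $j$, and $(bc;q)_k(bd;q)_k$ only on the row index $k$; this separation is exactly what will manufacture the denominator of $T_{a,c,d}$.

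First I would substitute this factorization into \eqref{det} and use multilinearity of the determinant. Pulling the row factor $(bc;q)_k(bd;q)_k$ out of the moment rows $k=0,\dots,n-1$ yields only the scalar $\prod_{k=0}^{n-1}(bc;q)_k(bd;q)_k$, since the last row holds the polynomials $A_j(x)$ rather than moments. Pulling the column factor $(ac;q)_j(ad;q)_j$ out of each column $j=0,\dots,n$ yields a second scalar $\prod_{j=0}^{n}(ac;q)_j(ad;q)_j$ and, crucially, divides the last-row entry $A_j(x)$ by $(ac;q)_j(ad;q)_j$. The surviving determinant is therefore the Hankel determinant in the moments $(c_n)$ whose last row reads $A_j(x)/[(ac;q)_j(ad;q)_j]=T_{a,c,d}(x^j)$ for $j=0,\dots,n$. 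Expanding along that row and using the linearity of $T_{a,c,d}$ to pull it outside the cofactor sum identifies this, up to the constants now absorbed in $C_n$, with $T_{a,c,d}$ applied to the orthogonal polynomial for the moment sequence $(c_n)$.

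Matching the weight $(cd;q)_k(ab)^k/(q;q)_k$ of the measure $d\nu$ against \eqref{qJacorth} shows that this orthogonal polynomial is $q_n(x;ab/q,cd/q|q)$, so $p_n(x;a,b,c,d|q)$ is proportional to $T_{a,c,d}\,q_n(x;ab/q,cd/q|q)$. To fix the proportionality at $1$ I would finish with the direct computation, exactly as in Theorem~\ref{thm:2.1}: substituting $\alpha=ab/q$, $\beta=cd/q$ into \eqref{qJacsum} gives lower parameter $\alpha q=ab$ and upper parameter $\alpha\beta q^{n+1}=abcdq^{n-1}$, and applying $T_{a,c,d}$ term by term turns the sum into
\[
    {}_4\phi_3\!\left(\left.\begin{array}{c} q^{-n},\,abcdq^{n-1},\,ae^{i\theta},\,ae^{-i\theta} \\ ab,\,ac,\,ad \end{array}\right| q,q\right),
\]
which is precisely \eqref{AW}.

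The main obstacle is the asymmetric bookkeeping in the multilinearity step. The row factors touch only the $n$ moment rows and hence contribute nothing but a scalar constant, whereas the column factors act on all $n+1$ columns and are the mechanism that rewrites the last row in the $T_{a,c,d}$-image basis. The care lies in verifying that the denominators $(ac;q)_j(ad;q)_j$ genuinely survive in the final row and match the definition of $T_{a,c,d}$, rather than being swept into the constant $C_n$ along with the row contributions.
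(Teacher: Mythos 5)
Your proposal is correct and follows essentially the same route as the paper's own proof: factoring the modified moments \eqref{modmomAW}, extracting the row factors $(bc,bd;q)_k$ and column factors $(ac,ad;q)_j$ from the determinant \eqref{det} so that the last row becomes $T_{a,c,d}(x^j)$, identifying the surviving Hankel moments with the little $q$-Jacobi measure, and then fixing the proportionality constant at $1$ by the explicit ${}_4\phi_3$ computation from \eqref{qJacsum}. Your remark on the asymmetric bookkeeping (row factors touching only the $n$ moment rows, column factors touching all $n+1$ columns including the polynomial row) is precisely the mechanism implicit in the paper's products $\prod_{k=0}^{n-1}(bc,bd;q)_k\prod_{j=0}^{n}(ac,ad;q)_j$.
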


\begin{proof}
If we insert \eqref{modmomAW} in the determinant \eqref{det}, then we can take out the factor $(bc;q)_k(bd;q)_k$ in the $k$th row and the factor
$(ac;q)_j(ad;q)_j$ in the $j$th column, to find
\[  p_n(x;a,b,c,d|q) =  C_n \prod_{k=0}^{n-1} (bc,bd;q)_k \prod_{j=0}^n (ac,ad;q)_j 
   \det \begin{pmatrix}
       c_0 & c_1 & c_2 & \cdots & c_n \\
        c_1 & c_2 & c_3 & \cdots & c_{n+1} \\
        c_2 & c_3 & c_4 & \cdots & c_{n+2} \\
          \vdots & \vdots & \vdots & \cdots & \vdots \\
        \hat{A}_0(x) & \hat{A}_1(x) & \hat{A}_2(x) & \cdots & \hat{A}_n(x) \end{pmatrix} ,  \]
where
\[    \hat{A}_k(x) = \frac{(ae^{i\theta},ae^{-i\theta};q)_k}{(ac;q)_k(ad;q)_k}.  \]
Recall that the little $q$-Jacobi polynomials satisfy the orthogonality \eqref{qJacorth}
\[   \sum_{k=0}^\infty q_n(q^k;a,b|q) q_m(q^k;a,b|q) \frac{(bq;q)_k}{(q;q)_k} (aq)^k = 0, \qquad m \neq n, \]
so the sequence $(c_n)_{n \in \mathbb{N}}$ contains the moments of the orthogonality measure for the little $q$-Jacobi polynomials
$q_n(x;ab/q,cd/q|q)$. The determinant representation of the little $q$-Jacobi polynomials is therefore given by
\[    q_n(x;ab/q,cd/q|q) = \widehat{C}_n \det \begin{pmatrix}
       c_0 & c_1 & c_2 & \cdots & c_n \\
        c_1 & c_2 & c_3 & \cdots & c_{n+1} \\
        c_2 & c_3 & c_4 & \cdots & c_{n+2} \\
          \vdots & \vdots & \vdots & \cdots & \vdots \\
         1 & x & x^2 & \cdots & x^n \end{pmatrix}, \]
where $\widehat{C}_n$ is a normalizing constant. Applying the linear transformation $T_{a,c,d}$ shows that $T_{acd} q_n(x;ab/q,cd/q|q)$ is
proportional to $p_n(x;a,b,c,d|q)$.
The explicit expression for the little $q$-Jacobi polynomials is \eqref{qJacsum}
hence applying the transformation $T_{a,c,d}$ to it gives
\begin{eqnarray*}  T_{a,c,d} q_n(x;ab/q,cd/q|q) &=&   \sum_{k=0}^n \frac{(q^{-n},q)_k (abcdq^{n-1};q)_k}{(ab;q)_k(q;q)_k} q^k 
         \frac{(ae^{i\theta},ae^{-i\theta};q)_k}{(ac;q)_k(ad;q)_k} \\
     &=&  {}_4\phi_3 \left( \left. \begin{array}{c} q^{-n} , abcdq^{n-1} , ae^{i\theta}, ae^{-i\theta} \\
                                                      ab , ac, ad \end{array} \right| q,q \right), 
\end{eqnarray*}
which is indeed the basic hypergeometric expression \eqref{AW} for the Askey--Wilson polynomial. Hence the proportionality factor is 1 and the result follows.
\end{proof}

The linear transformation $T_{a,c,d}$ can also be given explicitly and is in terms of the Al-Salam--Chihara polynomials. We will use a multiple of the
polynomials $p_n(x;a,b|q)$ defined above and put
\[   Q_n(x;a,b|q) = \frac{(ab;q)_n}{a^n} p_n(x;a,b|q) 
   = \frac{(ab;q)_n}{a^n} {}_3\phi_2 \left( \left.\begin{array}{c} q^{-n}, ae^{i\theta}, ae^{-i\theta} \\ ab, 0 \end{array} \right| q, q \right) .  \]
They have the following generating function (\cite[Eq. (14.8.13)]{KLS} \cite[Eq. (15.1.10)]{Ismail}
\begin{equation}   \label{AlSalChigen}
    \sum_{n=0}^\infty \frac{Q_n(x;a,b|q)}{(q;q)_n} t^n = \frac{(at,bt;q)_\infty}{(te^{i\theta},te^{-i\theta};q)_\infty}, \qquad x=\cos \theta.  
\end{equation}

\begin{theorem}   \label{thm:2.5}
The linear transformation $T_{a,c,d}$ that acts on polynomials as
\[   T_{a,c,d} x^k = \frac{(ae^{i\theta},ae^{-i\theta};q)_k}{(ac;q)_k(ad;q)_k} \]
is given by
\begin{equation}  \label{Tacd}
    (T_{a,c,d} f )(x) = \frac{(ae^{i\theta},ae^{-i\theta};q)_\infty}{(ac,ad;q)_\infty} \sum_{n=0}^\infty f(q^n) a^n \frac{Q_n(x;c,d|q)}{(q;q)_n}. 
\end{equation}
\end{theorem}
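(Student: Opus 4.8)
The plan is to mirror the proof of Theorem~\ref{thm:2.2}: the right-hand side of \eqref{Tacd} is manifestly linear in $f$, so it suffices to verify that it reproduces the prescribed action $T_{a,c,d}x^k = (ae^{i\theta},ae^{-i\theta};q)_k/((ac;q)_k(ad;q)_k)$ on each monomial. First I would substitute $f(x)=x^k$, so that $f(q^n)=q^{kn}$, turning the series into
\[ \frac{(ae^{i\theta},ae^{-i\theta};q)_\infty}{(ac,ad;q)_\infty} \sum_{n=0}^\infty q^{kn} a^n \frac{Q_n(x;c,d|q)}{(q;q)_n}. \]
The key observation is that this sum is a value of the Al-Salam--Chihara generating function \eqref{AlSalChigen}, with the parameter pair $(a,b)$ there replaced by $(c,d)$, evaluated at $t=aq^k$.

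Then I would set $t=aq^k$ in \eqref{AlSalChigen}, which yields
\[ \sum_{n=0}^\infty \frac{Q_n(x;c,d|q)}{(q;q)_n} (aq^k)^n = \frac{(acq^k,adq^k;q)_\infty}{(aq^ke^{i\theta},aq^ke^{-i\theta};q)_\infty}, \]
so that $T_{a,c,d}x^k$ becomes the product of $(ae^{i\theta},ae^{-i\theta};q)_\infty/(ac,ad;q)_\infty$ with this ratio. The final step is purely a matter of splitting infinite $q$-shifted factorials via $(\alpha;q)_\infty = (\alpha;q)_k (\alpha q^k;q)_\infty$: this gives $(ae^{i\theta},ae^{-i\theta};q)_\infty / (aq^ke^{i\theta},aq^ke^{-i\theta};q)_\infty = (ae^{i\theta},ae^{-i\theta};q)_k$ as well as $(acq^k,adq^k;q)_\infty/(ac,ad;q)_\infty = 1/((ac;q)_k(ad;q)_k)$. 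Multiplying these two factors collapses the expression exactly to $(ae^{i\theta},ae^{-i\theta};q)_k/((ac;q)_k(ad;q)_k)$, which is the desired value of $T_{a,c,d}x^k$.

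I do not anticipate a serious obstacle, since the computation parallels Theorem~\ref{thm:2.2} almost verbatim with the $q$-Hermite generating function replaced by the Al-Salam--Chihara one; the only point requiring a word of care is the convergence of the generating series at $t=aq^k$. Because $|a|<1$ and $0<q<1$, we have $|aq^k|\le |a|<1$ for every $k\ge 0$, so \eqref{AlSalChigen} applies at that argument and the manipulation is legitimate. Having matched the two maps on every monomial $x^k$, linearity extends the agreement to all polynomials, establishing \eqref{Tacd}.
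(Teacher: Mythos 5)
Your proposal is correct and follows essentially the same route as the paper's proof: verify linearity, reduce to monomials $x^k$, and evaluate the Al-Salam--Chihara generating function \eqref{AlSalChigen} (with parameters $c,d$) at $t=aq^k$, then simplify via $(\alpha;q)_\infty=(\alpha;q)_k(\alpha q^k;q)_\infty$. The only additions beyond the paper's argument are your explicit statement of the $q$-shifted factorial splitting and the convergence remark $|aq^k|\le|a|<1$, both of which the paper leaves implicit.
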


\begin{proof}
The transformation in \eqref{Tacd} is obviously linear, so we need only to check how it acts on polynomials $x^k$. Taking $f(x)=x^k$ in \eqref{Tacd}
gives
\[   T_{a,c,d} x^k =  \frac{(ae^{i\theta},ae^{-i\theta};q)_\infty}{(ac,ad;q)_\infty} \sum_{n=0}^\infty q^{kn} a^n \frac{Q_n(x;c,d|q)}{(q;q)_n} , \]
and if we put $t=aq^k$ in the generating function \eqref{AlSalChigen}, then this gives
\[  T_{a,c,d} x^k =  \frac{(ae^{i\theta},ae^{-i\theta};q)_\infty}{(ac,ad;q)_\infty} 
                     \frac{(acq^k,adq^k;q)_\infty}{(aq^ke^{i\theta},aq^ke^{-i\theta};q)_\infty} = \frac{(ae^{i\theta},ae^{-i\theta};q)_k}{(ac,ad;q)_k}, \]
which is the desired result.
\end{proof}
    
This transformation was given explicitly in \cite{IsmailKoel}, see e.g. their equation (4.5).  Their formula (4.7) also gives the Askey--Wilson
polynomials as the image of applying $T_{a,c,d}$ to little $q$-Jacobi polynomials.
When $c=d=0$ the transformation $T_{a,c,d}$ is equal to $T_a$, which reflects the fact that $Q_n(x;0,0|q) = H_n(x|q)$. 
This transformation $T_{a,c,d}$ also obeys a Plancherel type result in the following sense.
\begin{proposition}  \label{prop:2.6}
Let $\langle f,g \rangle_{\textup{dis}(a,b)}$ be the discrete inner product
\[    \langle f,g \rangle_{\textup{dis}(a,b)} = \frac{1}{(q;q)_\infty} \sum_{n=0}^\infty f(q^n) g(q^n) a^n \frac{(b;q)_n}{(q;q)_n}, \]
and $\langle u,v \rangle_{\textup{cont}(a,b,c,d)}$ be the continuous inner product
\[   \langle u,v \rangle_{\textup{cont}(a,b,c,d)} = \frac{(ac,ad,bc,bd,cd;q)_\infty}{2\pi} \int_{-1}^1 u(x)v(x) w(\theta;a,b,c,d|q) \frac{dx}{\sqrt{1-x^2}}, \]
where $w(\theta;a,b,c,d|q)$ is the Askey--Wilson weight function \eqref{AWw}. Then
\[    \langle T_{a,c,d}f , T_{b,c,d}g \rangle_{\textup{cont}(a,b,c,d)} = \langle f,g \rangle_{\textup{dis}(ab,cd)} .  \]
\end{proposition}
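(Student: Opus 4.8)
The plan is to follow the template of Proposition~\ref{prop:3.3}, replacing the continuous $q$-Hermite polynomials by the Al-Salam--Chihara polynomials $Q_n(x;c,d|q)$ and using the explicit series for $T_{a,c,d}$ from Theorem~\ref{thm:2.5}. First I would insert \eqref{Tacd} for both $T_{a,c,d}f$ and $T_{b,c,d}g$ into $\langle\cdot,\cdot\rangle_{\textup{cont}(a,b,c,d)}$. This yields a double sum over $n,m$ carrying the scalar prefactor
\[ \frac{(ac,ad,bc,bd,cd;q)_\infty}{(ac,ad;q)_\infty\,(bc,bd;q)_\infty} = (cd;q)_\infty, \]
together with the integral
\[ \frac{1}{2\pi}\int_{-1}^1 (ae^{i\theta},ae^{-i\theta},be^{i\theta},be^{-i\theta};q)_\infty\,Q_n(x;c,d|q)\,Q_m(x;c,d|q)\,w(\theta;a,b,c,d|q)\,\frac{dx}{\sqrt{1-x^2}}. \]

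The decisive step, carried out next, is that the infinite products $(ae^{i\theta},ae^{-i\theta},be^{i\theta},be^{-i\theta};q)_\infty$ cancel exactly the corresponding factors in the denominator of the Askey--Wilson weight \eqref{AWw}, so that the integral kernel collapses to the Al-Salam--Chihara weight $w(\theta;c,d|q)$ of \eqref{AlSalChiw}. The integral then becomes the orthogonality integral of the polynomials $Q_n(x;c,d|q)$ against their own weight, which is diagonal in $n,m$ and reduces the double sum to a single sum.

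The remaining point -- the one I would handle most carefully -- is the normalization. Here I would invoke the known squared norm of the Al-Salam--Chihara polynomials \cite[\S 14.8]{KLS}, which in the normalization fixed by the generating function \eqref{AlSalChigen} reads
\[ \frac{1}{2\pi}\int_{-1}^1 Q_n(x;c,d|q)^2\,w(\theta;c,d|q)\,\frac{dx}{\sqrt{1-x^2}} = \frac{(q;q)_n\,(cd;q)_n}{(q;q)_\infty\,(cd;q)_\infty}. \]
Substituting this together with the prefactor $(cd;q)_\infty$ into the surviving single sum, the factors $(cd;q)_\infty$ cancel, one $(q;q)_n$ cancels against the $(q;q)_n^2$ produced by the two copies of \eqref{Tacd}, and $a^n b^n$ combine into $(ab)^n$. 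What remains is exactly
\[ \frac{1}{(q;q)_\infty}\sum_{n=0}^\infty f(q^n)\,g(q^n)\,(ab)^n\,\frac{(cd;q)_n}{(q;q)_n} = \langle f,g\rangle_{\textup{dis}(ab,cd)}, \]
which is the claimed identity. The main obstacle is thus purely one of bookkeeping: aligning the Al-Salam--Chihara norm with the $q$-shifted factorials in the discrete inner product; the conceptual work is the same product cancellation that drove Proposition~\ref{prop:3.3}.
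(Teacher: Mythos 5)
Your proof is correct and follows essentially the same route as the paper: insert the series \eqref{Tacd} for both transforms, cancel the infinite products against the Askey--Wilson weight to expose the Al-Salam--Chihara weight $w(\theta;c,d|q)$, and invoke the orthogonality of $Q_n(x;c,d|q)$ (your norm $\frac{(q;q)_n(cd;q)_n}{(q;q)_\infty(cd;q)_\infty}$ is the same as the paper's $\frac{1}{(q^{n+1};q)_\infty(cdq^n;q)_\infty}$). If anything, your bookkeeping of the prefactor $(cd;q)_\infty$ is cleaner than the paper's, whose displayed intermediate expressions contain minor typographical slips.
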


\begin{proof}
If we use the expression \eqref{Tacd} for the transformation $T_{a,c,d}$ then
\begin{multline*}  \langle T_{a,c,d} f, T_{b,c,d} g \rangle_{a,b,c,d} =
   \sum_{n=0}^\infty \sum_{m=0}^\infty f(q^n)g(q^m) \frac{a^n b^m}{(q;q)_n (q;q)_m (ac,ad,bc,bd;q)_\infty} \\
  \times \frac{1}{2\pi} \int_0^\pi (ae^{i\theta},ae^{-i\theta};q)_\infty (be^{i\theta},be^{-i\theta};q)_\infty
    Q_n(x;c,d|q)Q_m(x;c;d|q) w(\theta;a,b,c,d)\, d\theta.  
\end{multline*}
Observe that the integral in this expression is
\begin{multline*}   \frac{1}{2\pi} \int_0^\pi Q_n(x;c,d|q) Q_m(x;c,d|q) 
   \frac{(e^{2i\theta},e^{-2i\theta};q)_\infty}{(ce^{i\theta},ce^{-i\theta},de^{i\theta},de^{-i\theta};q)_\infty} \,d\theta \\
   =  \frac{\delta_{n,m}}{(q^{n+1};q)_\infty (cdq^n;q)_\infty}, 
\end{multline*}
(see \cite[Eq. (14.8.2)]{KLS}), so that the double sum becomes a single sum
\[   \frac{1}{(q;q)_\infty (ac,ad,bc,bd,cd;q)_\infty} \sum_{n=0}^\infty f(q^n)g(q^n) (ab)^n \frac{(cd;q)_n}{(q;q)_\infty}, \]
which is the desired expression in terms of the discrete inner product $\langle f,g \rangle_{\textup{dis}(ab,cd)}$. 
\end{proof}

As a corollary, the orthogonality for the Askey--Wilson polynomials now follows from the orthogonality of the little $q$-Jacobi polynomials.
Indeed, if we use Theorem \ref{thm:2.4} then $T_{a,c,d} q_n(x;ab/q,cd/q|q) = p_n(x;a,b,c,d|q)$ and $T_{b,c,d} q_m(x;ab/q,cd/q|q) = p_m(x;b,a,c,d|q)$ and the latter is equal to $p_m(x;a,b,c,d|q)$. Hence by Proposition \ref{prop:4.3}
\begin{multline*}  \frac{(ac,ad,bc,bd,cd;q)_\infty}{2\pi} \int_0^\pi  p_n(x;a,b,c,d|q) p_m(x;a,b,c,d|q) w(\theta;a,b,c,d|q)\ d\theta \\ 
 =   \frac{1}{(q;q)_\infty} \sum_{k=0}^\infty q_n(q^k;ab/q,cd/q|q) q_m(q^k;ab/q,cd/q|q) (ab)^k \frac{(cd;q)_k}{(q;q)_k} ,
\end{multline*}
and this is $0$ whenever $m\neq n$ because of the orthogonality relations \eqref{qJacorth} for the little $q$-Jacobi polynomials.
Clearly one can also relate the norms of $p_n(x;a,b,c,d|q)$ with those of $q_n(x;ab/q,cd/q|q)$ by putting $^n=m$.

\subsection{Continuous dual $q$-Hahn polynomials and little $q$-Laguerre polynomials}  \label{sec:qHahn}
The modified moments for the continuous dual $q$-Hahn polynomials correspond to the modified moments of the Askey--Wilson weight with $d=0$,
\begin{eqnarray}  \label{momqHahn}
       m_{k,j} &=& \frac{1}{2\pi} \int_0^\pi (ae^{i\theta},ae^{-i\theta};q)_j ((be^{i\theta},be^{-i\theta};q)_k w(\theta|a,b,c|q)\, d\theta \nonumber \\
              &=&  \frac{(ab;q)_{k+j}(ac;q)_j (bc;q)_k}{(q;q)_\infty (ab,ac,bc;q)_\infty} .  
\end{eqnarray}
The Hankel part $(ab;q)_{k+j}$ corresponds to the moments of the discrete measure $\mu$ that we used in Section \ref{sec:AlSalChi},
which is the orthogonality measure for the little $q$-Laguerre polynomials. We can then prove the following result

\begin{theorem}    \label{thm:2.7}
Let $T_{a,c}: f \mapsto T_{a,c} f$ be the linear transformation that acts on polynomials as
\[   T_{a,c} x^k = \frac{(ae^{i\theta},ae^{-i\theta};q)_k}{(ac;q)_k}.  \]
Then the continuous dual $q$-Hahn polynomials $p_n(x;a,b,c|q)$ and the little $q$-Laguerre polynomials $q_n(x;a|q)$ are connected by
\[    p_n(x;a,b,c|q) = T_{a,c} q_n(x;ab/q|q) .  \]
\end{theorem}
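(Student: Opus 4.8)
The plan is to reuse the two-step template from the proofs of Theorems~\ref{thm:2.1} and~\ref{thm:2.4}: a Hankel-determinant argument first shows that $T_{a,c}q_n(x;ab/q|q)$ is proportional to $p_n(x;a,b,c|q)$, and then a direct application of $T_{a,c}$ to the explicit series fixes the proportionality constant at $1$. The present case is in fact the $d=0$ specialization of the Askey--Wilson computation, combined with the little $q$-Laguerre identification from Section~\ref{sec:AlSalChi}.

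First I would insert the modified moments \eqref{momqHahn} into the determinant \eqref{det}. Since $m_{k,j}$ factors as a Hankel part $(ab;q)_{k+j}$ times a row factor $(bc;q)_k$ and a column factor $(ac;q)_j$, I can pull $(bc;q)_k$ out of the $k$th moment row and $(ac;q)_j$ out of the $j$th column. The bottom row carries the entries $A_j(x)=(ae^{i\theta},ae^{-i\theta};q)_j$ from \eqref{Ak} and has no row factor, so after the $j$th column is divided by $(ac;q)_j$ its entries become
\[
   \hat{A}_j(x) = \frac{(ae^{i\theta},ae^{-i\theta};q)_j}{(ac;q)_j} = T_{a,c}(x^j).
\]
What remains is a Hankel determinant in a sequence $c_n$ proportional to $(ab;q)_n$, with bottom row $\hat{A}_0(x),\ldots,\hat{A}_n(x)$.

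Next I would identify the Hankel part. The sequence $(ab;q)_{k+j}$ is, up to the overall constant, exactly the moment sequence of the discrete measure $\mu$ studied in Section~\ref{sec:AlSalChi}, which is the orthogonality measure for the little $q$-Laguerre polynomials $q_n(x;ab/q|q)$. Hence the Hankel determinant with bottom row $1,x,\ldots,x^n$ is the determinantal representation of $q_n(x;ab/q|q)$, and applying $T_{a,c}$ to that representation — which replaces each $x^j$ by $\hat{A}_j(x)$ — reproduces the factored determinant above. This establishes that $T_{a,c}q_n(x;ab/q|q)$ is proportional to $p_n(x;a,b,c|q)$.

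Finally, to pin down the constant, I would apply $T_{a,c}$ directly to the explicit expansion \eqref{qLagsum} of $q_n(x;ab/q|q)$, noting that the parameter $ab/q$ turns the denominator factor $(aq;q)_k$ into $(ab;q)_k$, so that
\[
   T_{a,c}q_n(x;ab/q|q) = \sum_{k=0}^n \frac{(q^{-n};q)_k}{(ab;q)_k(q;q)_k}\,q^k\,\frac{(ae^{i\theta},ae^{-i\theta};q)_k}{(ac;q)_k}
   = {}_3\phi_2 \left( \left.\begin{array}{c} q^{-n}, ae^{i\theta}, ae^{-i\theta} \\ ab, ac \end{array} \right| q, q \right),
\]
which is precisely the basic hypergeometric expression \eqref{dualqHahn} for $p_n(x;a,b,c|q)$, so the proportionality constant is $1$. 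The only step demanding genuine care is matching the Hankel part $(ab;q)_{k+j}$ with the moment sequence of Section~\ref{sec:AlSalChi} and verifying that the row and column factors separate cleanly; the extra column factor $(ac;q)_j$ (absent in the Al-Salam--Chihara case) is exactly what produces the denominator of $T_{a,c}$, and everything else is routine.
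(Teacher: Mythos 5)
Your proposal is correct and follows essentially the same route as the paper: insert the modified moments \eqref{momqHahn} into the determinant \eqref{det}, pull out the row factors $(bc;q)_k$ and column factors $(ac;q)_j$, and identify the remaining Hankel part with the little $q$-Laguerre moment sequence of Section \ref{sec:AlSalChi}. In fact you go one step further than the paper's own proof, which stops at proportionality; your final verification that $T_{a,c}$ applied to \eqref{qLagsum} reproduces the ${}_3\phi_2$ in \eqref{dualqHahn} (fixing the constant at $1$) is a welcome completion of the argument, mirroring what the paper does for Theorems \ref{thm:2.1} and \ref{thm:2.4}.
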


\begin{proof}
If we insert the modified moments \eqref{momqHahn} in \eqref{det}, then we can take out the factor $(bc;q)_k$ in row $k$ and the factor
$(ac;q)_j$ in column $j$. This gives
\[    p_n(x;a,b,c|q) = C_n \prod_{k=0}^{n-1} (bc;q)_k \prod_{j=0}^n  (ac;q)_j \det \begin{pmatrix}
                     c_0 & c_1 & c_2 & \cdots & c_n \\
                     c_1 & c_2 & c_3 & \cdots & c_{n+1} \\
                     \vdots & \vdots & \vdots & \cdots & \vdots \\
                     \tilde{A}_0(x) & \tilde{A}_1(x) & \tilde{A}_2(x) & \cdots & \tilde{A}_n(x) \end{pmatrix},  \]
where
\[   \tilde{A}_k(x) = \frac{(ae^{i\theta},ae^{-i\theta};q)_k}{(ac;q)_k},  \]
and $(c_n)_{n\in \mathbb{N}}$ are the moments of the measure $\mu$, which is the measure for the little $q$-Laguerre polynomials
$q_n(x;ab/q|q)$. Hence applying $T_{a,c}$ to $q_n(x;ab/q|q)$ gives the continuous dual $q$-Hahn polynomials.
\end{proof}

The linear transformation $T_{a,c}$ can again be given explicitly and is in term of the continuous big $q$-Hermite polynomials
$H_n(x;a|q)$, which are given in \cite[\S 14.18]{KLS}. The orthogonality relations are
\[   \frac{1}{2\pi} \int_{-1}^1 H_n(x;a|q)H_m(x;a|q)  \frac{(e^{2i\theta},e^{-2i\theta};q)_\infty}{(ae^{i\theta},ae^{-i\theta};q)_\infty} 
 \frac{dx}{\sqrt{1-x^2}} = \frac{\delta_{m,n}}{(q^{n+1},q)_\infty} , \]
and they have a generating function
\begin{equation}   \label{bigqHergen}
  \sum_{n=0}^\infty  \frac{H_n(x;a|q)}{(q;q)_n} t^n = \frac{(at;q)_\infty}{(te^{i\theta},te^{-i\theta};q)_\infty}, \qquad x=\cos \theta.  
\end{equation}Observe that for $a=0$ they reduce to the continuous $q$-Hermite polynomials that we used in Section \ref{sec:AlSalChi}.

\begin{theorem}   \label{thm:2.8}
The linear transformation $T_{a,c}$ that acts on polynomials as
\[   T_{a,c} x^k = \frac{(ae^{i\theta},ae^{-i\theta};q)_k}{(ac;q)_k}  \]
is given by
\begin{equation}   \label{Tac}
   (T_{a,c} f)(x) = \frac{(ae^{i\theta},ae^{-i\theta};q)_\infty}{(ac;q)_\infty} \sum_{n=0}^\infty f(q^n) a^n \frac{H_n(x;c|q)}{(q;q)_n} .  
\end{equation}
\end{theorem}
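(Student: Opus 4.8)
The plan is to mimic exactly the proofs of Theorems \ref{thm:2.2} and \ref{thm:2.5}, since the only structural change is that the relevant generating function is now the one for the continuous big $q$-Hermite polynomials. First I would observe that the operator defined by \eqref{Tac} is manifestly linear in $f$, so it suffices to verify that it reproduces the prescribed action $T_{a,c} x^k = (ae^{i\theta},ae^{-i\theta};q)_k/(ac;q)_k$ on the monomials $x^k$; agreement on the basis $\{x^k\}$ then determines the transformation on all polynomials.

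Next I would substitute $f(x)=x^k$ into \eqref{Tac}, so that $f(q^n)=q^{kn}$, giving
\[ T_{a,c} x^k = \frac{(ae^{i\theta},ae^{-i\theta};q)_\infty}{(ac;q)_\infty} \sum_{n=0}^\infty (aq^k)^n \frac{H_n(x;c|q)}{(q;q)_n}. \]
The inner sum is precisely the left-hand side of the generating function \eqref{bigqHergen} with parameter $c$ and with $t=aq^k$, so it evaluates to $(acq^k;q)_\infty/(aq^k e^{i\theta},aq^k e^{-i\theta};q)_\infty$.

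Finally I would simplify the resulting infinite products. Using the shift identity $(ae^{i\theta},ae^{-i\theta};q)_\infty/(aq^k e^{i\theta},aq^k e^{-i\theta};q)_\infty = (ae^{i\theta},ae^{-i\theta};q)_k$ for the $\theta$-dependent factors, together with $(acq^k;q)_\infty/(ac;q)_\infty = 1/(ac;q)_k$ for the remaining factor, the two products combine to give exactly $T_{a,c} x^k = (ae^{i\theta},ae^{-i\theta};q)_k/(ac;q)_k$, as required.

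I do not expect a genuine obstacle here, as the argument is the word-for-word analogue of Theorem \ref{thm:2.2} with the continuous $q$-Hermite generating function \eqref{qHergen} replaced by its continuous big $q$-Hermite counterpart \eqref{bigqHergen}. The one point to track carefully is that the extra numerator factor $(ct;q)_\infty=(acq^k;q)_\infty$ produced by the big $q$-Hermite generating function is exactly what supplies the denominator $(ac;q)_k$ distinguishing $T_{a,c}$ from $T_a$; accounting for this factor is the single substantive difference from the $c=0$ case treated in Theorem \ref{thm:2.2}.
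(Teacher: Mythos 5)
Your proof is correct and is exactly the paper's argument: the paper's own proof of Theorem \ref{thm:2.8} simply states that linearity is obvious and that the action on $x^k$ "can easily be checked" using the generating function \eqref{bigqHergen}, which is precisely the computation you carry out (and it matches, step for step, the detailed verifications the paper gives for Theorems \ref{thm:2.2} and \ref{thm:2.5}). Your closing remark correctly identifies the factor $(ct;q)_\infty = (acq^k;q)_\infty$ as the only substantive difference from the $c=0$ case, supplying the denominator $(ac;q)_k$.
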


\begin{proof}
The linearity is obvious and the action of $T_{a,c}$ on $x^k$ can easily be checked, using the generating function \eqref{bigqHergen}.
\end{proof}

\begin{proposition} \label{prop:2.9}
Let $\langle f,g \rangle_{\textup{dis}(a)}$ be the discrete inner product
\[   \langle f,g \rangle_{\textup{dis}(a)} = \frac{1}{(q;q)_\infty} \sum_{n=0}^\infty f(q^n)g(q^n) \frac{a^n}{(q;q)_n}, \]
and $\langle u,v \rangle_{\textup{cont}(a,b,c)}$ the continuous inner product
\[   \frac{(ac,bc;q)_\infty}{2\pi} \int_{-1}^1 u(x)v(x) w(\theta;a,b,c|q)\, d\theta, \]
where $w(\theta;a,b,c|q)$ is the weight function \eqref{qHahnw}. Then
\[   \langle T_{a,c} f ,T_{b,c} g \rangle_{\textup{cont}(a,b,c)} = \langle f,g \rangle_{\textup{dis}(ab)} . \]
\end{proposition}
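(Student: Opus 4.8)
The plan is to follow verbatim the template already established in Propositions \ref{prop:3.3} and \ref{prop:2.6}. First I would substitute the explicit representation \eqref{Tac} for both $T_{a,c}f$ and $T_{b,c}g$ into the continuous inner product $\langle \cdot,\cdot\rangle_{\textup{cont}(a,b,c)}$. Writing everything out and interchanging summation with integration yields a double sum
\[ \langle T_{a,c}f, T_{b,c}g \rangle_{\textup{cont}(a,b,c)} = \sum_{n=0}^\infty \sum_{m=0}^\infty f(q^n) g(q^m) \frac{a^n b^m}{(q;q)_n (q;q)_m}\, I_{n,m}, \]
where $I_{n,m}$ gathers the surviving prefactors together with the integral of $H_n(x;c|q) H_m(x;c|q)$ against the leftover weight.

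Next I would track the cancellation of the prefactors, which is the heart of the matter. The normalization of the inner product carries $(ac,bc;q)_\infty$, while the two transformations \eqref{Tac} contribute denominators $(ac;q)_\infty$ and $(bc;q)_\infty$; these infinite products cancel exactly. In the same way the numerators $(ae^{i\theta},ae^{-i\theta};q)_\infty$ and $(be^{i\theta},be^{-i\theta};q)_\infty$ of the two transformations cancel precisely the $a$- and $b$-factors in the denominator of the continuous dual $q$-Hahn weight \eqref{qHahnw}. What remains is exactly the continuous big $q$-Hermite weight with parameter $c$, so that
\[ I_{n,m} = \frac{1}{2\pi} \int_{-1}^1 H_n(x;c|q) H_m(x;c|q) \frac{(e^{2i\theta},e^{-2i\theta};q)_\infty}{(ce^{i\theta},ce^{-i\theta};q)_\infty} \frac{dx}{\sqrt{1-x^2}} = \frac{\delta_{n,m}}{(q^{n+1};q)_\infty}, \]
by the orthogonality relation for the continuous big $q$-Hermite polynomials recalled just before Theorem \ref{thm:2.8}.

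Finally I would invoke the Kronecker delta to collapse the double sum to the diagonal $n=m$, and simplify the norm using $(q^{n+1};q)_\infty = (q;q)_\infty/(q;q)_n$, so that the factor $1/(q^{n+1};q)_\infty$ becomes $(q;q)_n/(q;q)_\infty$ and cancels one of the two $(q;q)_n$ appearing in the denominator. This leaves
\[ \frac{1}{(q;q)_\infty} \sum_{n=0}^\infty f(q^n) g(q^n) \frac{(ab)^n}{(q;q)_n}, \]
which is precisely $\langle f,g \rangle_{\textup{dis}(ab)}$, and the proof is complete. The only genuinely delicate point is the bookkeeping in the second paragraph: confirming that the product of the two transformation numerators against the dual $q$-Hahn weight reduces cleanly to the big $q$-Hermite weight in the single parameter $c$. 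Everything else is routine, and indeed the argument is the exact analogue of the one used for $T_a$ and $T_{a,c,d}$, with the continuous $q$-Hermite orthogonality \eqref{qHerorth} now replaced by its big $q$-Hermite counterpart.
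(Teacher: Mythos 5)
Your proposal is correct and carries out in full detail one of the two routes the paper's own (one-line) proof indicates: namely, repeating the argument of Propositions \ref{prop:3.3} and \ref{prop:2.6} with the continuous big $q$-Hermite orthogonality replacing the continuous $q$-Hermite one, and your bookkeeping of the cancellations and the collapse of the double sum is accurate. The paper's even shorter alternative, which you did not mention, is to simply set $d=0$ in Proposition \ref{prop:2.6}, since $T_{a,c}=T_{a,c,0}$ and the dual $q$-Hahn weight is the Askey--Wilson weight with $d=0$.
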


\begin{proof}
This follows from Proposition \ref{prop:2.6} by taking $d=0$, or from the orthogonality of the continuous big $q$-Hermite polynomials in a similar
way as in the proofs of Proposition \ref{prop:3.3} and \ref{prop:2.6}.
\end{proof}

As a corollary one can deduce the orthogonality of the continuous dual $q$-Hahn polynomials from the orthogonality of the little $q$-Laguerre
polynomials, by using Theorem \ref{thm:2.7}.

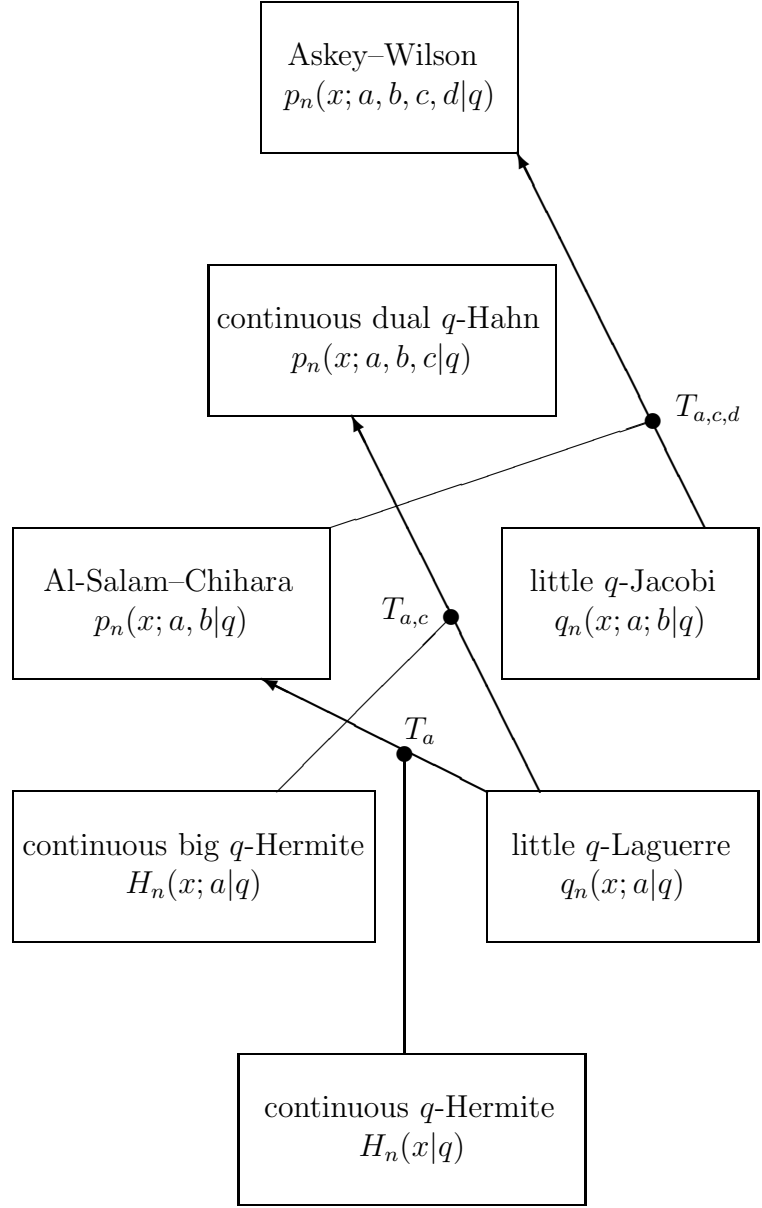
\begin{figure}[ht]
\centering
\unitlength=1mm
\begin{picture}(100,160)(0,20)
\put(33,160){\framebox(34,20){\parbox{1.2in}{Askey--Wilson \\ \centering $p_n(x;a,b,c,d|q)$}}}
\put(26,125){\framebox(46,20){\parbox{1.8in}{continuous dual $q$-Hahn \\ \centering $p_n(x;a,b,c|q)$}}}                    
\put(0,90){\framebox(42,20){\parbox{1.4in}{Al-Salam--Chihara \\ \centering $p_n(x;a,b|q)$}}}
\put(0,55){\framebox(48,20){\parbox{1.8in}{continuous big $q$-Hermite \\ \centering $H_n(x;a|q)$}}}
\put(65,90){\framebox(34,20){\parbox{1.2in}{little $q$-Jacobi \\ \centering $q_n(x;a;b|q)$}}}
\put(63,55){\framebox(36,20){\parbox{1.2in}{little $q$-Laguerre \\ \centering $q_n(x;a|q)$}}}
\put(30,20){\framebox(46,20){\parbox{1.6in}{continuous $q$-Hermite \\ \centering $H_n(x|q)$}}}
\put(88,125){$T_{a,c,d}$}
\put(52,82){$T_a$}
\put(42,110){\line(3,1){43}}
\put(52,80){\circle*{2}}
\put(85,124.3){\circle*{2}}
\put(52,40){\line(0,1){40}}
\put(35,75){\line(1,1){23}}
\put(58.2,98.2){\circle*{2}}
\put(49,98){$T_{a,c}$}
\thicklines
\put(63,75){\vector(-2,1){30}}
\put(70,75){\vector(-1,2){25}}
\put(92,110){\vector(-1,2){25}}
\end{picture}
\caption{Schematic presentation of the hypergeometric orthogonal polynomials and the transformations between them.}
\label{fig:1}
\end{figure}

It is interesting to see how these various families of basic hypergeometric polynomials are connected, see Figure \ref{fig:1}.
Continuous $q$-Hermite polynomials are at the bottom of the $q$-Askey scheme of basic hypergeometric polynomials \cite[p.~413]{KLS},
with a weight function $w(\theta|q) = (e^{2i\theta},e^{-2i\theta};q)_\infty$. You need them for the transformation $T_a$ that maps
little $q$-Laguerre polynomials to Al-Salam--Chihara polynomials, which have a weight $w(\theta;a,b|q)$ with two parameters $a,b$.
A more general transformation $T_{a,c}$ maps the same little $q$-Laguerre polynomials to continuous dual $q$-Hahn polynomials, which have
a weight function $w(\theta;a,b,c|q)$ with three parameters.
The Al-Salam--Chihara polynomials in turn are needed for the transformation $T_{a,c,d}$ which maps little $q$-Jacobi polynomials
to Askey--Wilson polynomials which have a weight function $w(\theta;a,b,c,d|q)$ with four parameters.
These transformations $T_a$, $T_{a,c}$ and $T_{a,c,d}$ seem to be special cases of the Askey--Wilson function transform given in
\cite[Eq. (5.9)]{KoelStok} and \cite[p.~312]{Stok}, since the polynomials $H_n(x|q)$, $H_n(x;a|q)$ and $Q_n(x;a,b|q)$ in our
tranformations $T_a, T_{a,c}$ and $T_{a,c,d}$ are special cases of the Askey--Wilson polynomial $p_n(x;a,b,c,d|q)$, but this needs a little more inspection of those papers. Our Plancherel type formulas then correspond to \cite[Thm.~1 and Prop.~3]{KoelStok} and \cite[Thm. 5.1]{Stok}.

\section{Multiple basic hypergeometric polynomials}   \label{sec:3}
We will now use the results from the previous section to construct multiple hypergeometric orthogonal polynomials for Askey--Wilson
weights, continuous dual $q$-Hahn weights and Al-Salam--Chihara weights from discrete multiple orthogonal polynomials on the $q$-lattice
$\{q^k, k \in \mathbb{N}\}$. We always use $r$ weights $(w_1,\ldots,w_r)$ obtained by changing the parameter $a$ to a vector $\vec{a}=(a_1,\ldots,a_r)$.
We start by recalling some results for the multiple little $q$-Laguerre and the multiple little $q$-Jacobi polynomials.

\subsection{Little $q$-Laguerre polynomials}  \label{sec:3.1}
The little $q$-Laguerre polynomials are given by \cite[\S 14.20]{KLS}
\begin{equation}   \label{qLag}
   q_n(x;a|q) = {}_2\phi_1 \left( \left. \begin{array}{c} q^{-n} , 0 \\ aq \end{array} \right| q, qx \right) =
   \sum_{k=0}^n \frac{(q^{-n};q)_k}{(q;q)_k (aq;q)_k} q^kx^k .
\end{equation}
When taking $a = q^\alpha$, they can be obtained by the Rodrigues formula
\begin{equation}   \label{qLagRod}
    (qx;q)_\infty x^\alpha q_n(x;a|q) = \frac{q^{\binom{n}{2}}a^n(1-q)^n}{(aq;q)_n} D_p^{n} x^{n+\alpha} (qx;q)_\infty , 
\end{equation}
where $p=1/q$ and $D_p$ is the $p$-difference operator
\[    D_p f(x) = \frac{f(px)-f(x)}{x(p-1)} = \frac{q}{1-q}  \frac{f(px)-f(x)}{x}. \]
Note that these polynomials are neither monic nor orthonormal, but they are normalized by $q_n(0;a|q) = 1$.

Multiple little $q$-Laguerre polynomials $q_{\vec{n}}(x;\vec{a}|q)$ for the multi-index
$\vec{n}=(n_1,n_2,\ldots,n_r)$ depend on $r$ parameters $\vec{a} = (a_1,a_2,\ldots,a_r)$ and
satisfy multiple orthogonality relations
\begin{equation}  \label{mqLagorth}
   \sum_{k=0}^\infty q_{\vec{n}}(q^k;\vec{a}|q) q^{k\ell}  \frac{(a_jq)^k}{(q;q)_k} = 0, \qquad 0 \leq k \leq n_j-1, 
\end{equation}
for $1 \leq j \leq r$. They can be defined by the Rodrigues formula
\begin{equation}   \label{mqLagRod}
     (qx;q)_\infty q_{\vec{n}}(x;\vec{a}|q) = C_{\vec{n}}(\vec{a}) \prod_{j=1}^r \left( x^{-\alpha_j} D_p^{n_j}
       x^{n_j+\alpha_j} \right)  \ (qx;q)_\infty, 
\end{equation}
where $a_j = q^{\alpha_j}$ and the $p$-difference operators $x^{-\alpha_j} D_p^{n_j}  x^{n_j+\alpha_j}$ are commuting. 

\begin{theorem}   \label{thm:3.1}
If we take the normalizing factor in \eqref{mqLagRod} as
\begin{equation}  \label{Cn}
  C_{\vec{n}}(\vec{a}) = (1-q)^{|\vec{n}|} \prod_{j=1}^r \frac{q^{\binom{n_j}{2}} a_j^{n_j}}{(a_jq;q)_{n_j}},
\end{equation}
then an explicit expression for the multiple little $q$-Laguerre polynomials is given by
\begin{equation} \label{mqLag}
   q_{\vec{n}}(x;\vec{a}|q) 
= \sum_{k_1=0}^{n_1} \cdots \sum_{k_r=0}^{n_r}  \prod_{j=1}^r \frac{(q^{-n_j};q)_{k_j}}{(q;q)_{k_j}(a_jq;q)_{k_j}}  \prod_{j=1}^{r-1} \frac{(a_jq^{n_j+1};q)_{k_{j+1}+\cdots+k_r}}{(a_jq^{k_j+1};q)_{k_{j+1}+\cdots+k_r}}
       \frac{(qx)^{|\vec{k}|}}{q^{\sum_{j=1}^r n_j \sum_{i=j+1}^r k_i}} .
\end{equation}
These multiple little $q$-Laguerre polynomials are normalized so that $q_{\vec{n}}(0;\vec{a}|q)=1$.
\end{theorem}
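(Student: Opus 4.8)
The plan is to compute the iterated Rodrigues operator in \eqref{mqLagRod} directly, reducing everything to the single-index formula \eqref{qLagRod} applied repeatedly, and then to collapse the resulting $q$-Pochhammer symbols into the closed form \eqref{mqLag}. Write $w(x)=(qx;q)_\infty$ and abbreviate the commuting operators by $\mathcal{D}_j = x^{-\alpha_j}D_p^{n_j}x^{n_j+\alpha_j}$, so that the right-hand side of \eqref{mqLagRod} is $C_{\vec{n}}(\vec{a})\,\mathcal{D}_1\mathcal{D}_2\cdots\mathcal{D}_r\,w$. The single building block is the observation that \eqref{qLagRod}, with $\alpha$ replaced by $\alpha_m+s$ for a nonnegative integer $s$ (equivalently $a$ replaced by $a_mq^s$), yields
\[
\mathcal{D}_m\bigl(w(x)\,x^s\bigr) = \frac{(a_mq^{s+1};q)_{n_m}}{q^{\binom{n_m}{2}}(a_mq^s)^{n_m}(1-q)^{n_m}}\; w(x)\, x^s\, q_{n_m}(x;a_mq^s|q),
\]
because $x^{n_m+\alpha_m}\cdot w\,x^s = x^{n_m+(\alpha_m+s)}w$ is exactly the argument appearing in \eqref{qLagRod}. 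Note that the factors $x^{\pm\alpha_m}$ cancel, so the outcome is genuinely $w$ times a polynomial.

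First I would apply the operators from the inside out, i.e.\ in the order $\mathcal{D}_r,\mathcal{D}_{r-1},\ldots,\mathcal{D}_1$, and prove by descending induction on $m$ a closed expression for $G_m:=\mathcal{D}_m\cdots\mathcal{D}_r\,w$. At each stage $G_{m+1}$ is $w$ times a polynomial, a linear combination of monomials $x^s$; by linearity of $\mathcal{D}_m$ and $D_p$ I would apply the displayed single-step identity to each monomial, with $s=k_{m+1}+\cdots+k_r$ the accumulated degree coming from the inner operators. Expanding $q_{n_m}(x;a_mq^s|q)=\sum_{k_m}\frac{(q^{-n_m};q)_{k_m}}{(q;q)_{k_m}(a_mq^{s+1};q)_{k_m}}(qx)^{k_m}$ via \eqref{qLag} then introduces the new summation index $k_m$ and raises the degree to $s+k_m$.

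The heart of the argument, and the step I expect to be the main obstacle, is the $q$-Pochhammer bookkeeping needed to recognize $G_1$ as the stated sum. The prefactor $(a_mq^{s+1};q)_{n_m}$ and the denominator $(a_mq^{s+1};q)_{k_m}$ both depend on the inner summation variable $s$, and must be separated into a genuinely constant part and a part that feeds the nested product. Here I would use the splitting $(a;q)_{s+n}=(a;q)_n\,(aq^n;q)_s=(a;q)_s\,(aq^s;q)_n$ in the form
\[
\frac{(a_mq^{s+1};q)_{n_m}}{(a_mq^{s+1};q)_{k_m}} = \frac{(a_mq;q)_{n_m}}{(a_mq;q)_{k_m}}\cdot\frac{(a_mq^{n_m+1};q)_{s}}{(a_mq^{k_m+1};q)_{s}}.
\]
The second factor, with $s=k_{m+1}+\cdots+k_r$, is exactly the $j=m$ term of the nested product in \eqref{mqLag} (trivial when $m=r$, which is why that product stops at $r-1$); the factor $1/(a_mq;q)_{k_m}$ joins $(q^{-n_m};q)_{k_m}/(q;q)_{k_m}$ to form the main product; and the pure powers $(a_mq^s)^{-n_m}=a_m^{-n_m}q^{-n_m\sum_{i>m}k_i}$ accumulate over $m$ to produce precisely the denominator $q^{\sum_{j=1}^r n_j\sum_{i=j+1}^r k_i}$, while the $q^{k_m}$ from each expansion combine with $x^{|\vec{k}|}$ into $(qx)^{|\vec{k}|}$.

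Finally I would collect the clean constants. Each step contributes $\frac{(a_mq;q)_{n_m}}{q^{\binom{n_m}{2}}a_m^{n_m}(1-q)^{n_m}}$, so $G_1$ equals $\prod_{m=1}^r\frac{(a_mq;q)_{n_m}}{q^{\binom{n_m}{2}}a_m^{n_m}(1-q)^{n_m}}$ times $w(x)$ times the multiple sum in \eqref{mqLag}. Multiplying by the constant $C_{\vec{n}}(\vec{a})$ from \eqref{Cn} cancels every $q^{\binom{n_m}{2}}$, every $a_m^{n_m}$, and every $(a_mq;q)_{n_m}$, and turns $(1-q)^{|\vec{n}|}\prod_m(1-q)^{-n_m}$ into $1$; hence $C_{\vec{n}}(\vec{a})\,G_1=w(x)\,q_{\vec{n}}(x;\vec{a}|q)$ with $q_{\vec{n}}$ given exactly by \eqref{mqLag}. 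Dividing by $w(x)$ gives the formula, and setting $x=0$ (only $\vec{k}=\vec{0}$ survives) confirms the normalization $q_{\vec{n}}(0;\vec{a}|q)=1$.
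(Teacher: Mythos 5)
Your proof is correct and follows essentially the same route as the paper's: an induction that peels off one operator $x^{-\alpha_j}D_p^{n_j}x^{n_j+\alpha_j}$ at a time, applies the one-variable Rodrigues formula \eqref{qLagRod} with shifted parameter $a_mq^s$ to each monomial of the inner result, and uses the same $q$-Pochhammer splitting to produce the nested product and the power $q^{-\sum_j n_j\sum_{i>j}k_i}$; your descending induction on $m$ is just the paper's induction on $r$ unrolled, and your final collection of constants matches the paper's determination of $C_{\vec{n}}(\vec{a})$ in \eqref{Cn}.
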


\begin{proof}
We will use induction on $r$. For $r=1$ we have the result \eqref{qLag} for the usual little $q$-Laguerre polynomials in
\cite[\S 14.20]{KLS} and
\begin{equation}   \label{Cr1}
  C_n(a) = \frac{q^{\binom{n}{2}}a^n(1-q)^n}{(aq;q)_n},
\end{equation}
as can be deduced from \eqref{qLagRod}.

Suppose the result is true for $r-1$. The difference operators $x^{-\alpha_j} D_p^{n_j} x^{n_j+\alpha_j}$,
$1 \leq j \leq r$ are all commuting, so the order in which we take the product of these operators
is irrelevant. The Rodrigues formula \eqref{mqLagRod} can then be written as
\[   (qx;q)_\infty q_{\vec{n}}(x;\vec{a}|q) = C_{\vec{n}}(\vec{a}) x^{-\alpha_1} D_p^{n_1} x^{n_1+\alpha_1}
    \prod_{j=2}^r \left( x^{-\alpha_j} D_p^{n_j} x^{n_j+\alpha_j} \right) (qx;q)_\infty. \]
By the induction hypothesis, the product $\prod_{j=2}^r$ can be written as an $(r-1)$-fold sum and we have
\begin{eqnarray*}
  (qx;q)_\infty q_{\vec{n}}(x;\vec{a}|q) &=& \frac{C_{\vec{n}}(\vec{a})}{C_{\vec{n}-n_1\vec{e}_1}(\vec{a}-a_1\vec{e}_1)}
   x^{-\alpha_1} D_p^{n_1} x^{n_1+\alpha_1} (qx;q)_\infty q_{\vec{n}-n_1\vec{e}_1}(x;\vec{a}-a_1\vec{e}_1|q) \\
   & = & \frac{C_{\vec{n}}(\vec{a})}{C_{\vec{n}-n_1\vec{e}_1}(\vec{a}-a_1\vec{e}_1)}
         \sum_{k_2=0}^{n_2} \cdots \sum_{k_r=0}^{n_r} \prod_{j=2}^r \frac{(q^{-n_j};q)_{k_j}}{(q;q)_{k_j} (a_jq;q)_{k_j}} \\
     & &  \times   \prod_{j=2}^{r-2} \frac{a_jq^{n_j+1};q)_{k_{j+1}+\cdots+k_r}}{(a_jq^{k_j+1};q)_{k_{j+1}+\cdots+k_r}} 
     \frac{(qx)^{k_2+\cdots+k_r}}{q^{\sum_{j=2}^r n_j \sum_{i=j+1}^r k_i}}  \\
     & & \times \   x^{-\alpha_1} D_p^{n_1} x^{n_1+\alpha_1+k_2+\cdots+k_r} (qx;q)_\infty. 
\end{eqnarray*}
 The $p$-difference on the last line can be worked out using the Rodrigues formula \eqref{qLagRod} for $r=1$
\[   x^{-\alpha_1} D_p^{n_1} x^{n_1+\alpha_1+k_2+\cdots+k_r} (qx;q)_\infty = \frac{(qx;q)_\infty}{C_{n_1}(a_1q^{k_2+\cdots+k_r})}
     q_{n_1}(x;a_1q^{k_2+\cdots+k_r}|q), \]
and if we use the sum \eqref{qLag} and the expression \eqref{Cr1} for $C_{n_1}(a_1q^{k_2+\cdots+k_r})$ then after some calculus we find the desired expression \eqref{mqLag}, provided  
\[    C_{\vec{n}}(\vec{a}) = \frac{q^{\binom{n_1}{2}} a_1^{n_1} (1-q)^{n_1}}{(a_1q;q)_{n_1}} C_{\vec{n}-n_1\vec{e}_1} (\vec{a}-a_1\vec{e}_1). \]
This is achieved by taking $C_{\vec{n}}(\vec{a})$ as in \eqref{Cn}.
\end{proof}

\subsection{Little $q$-Jacobi polynomials}   \label{sec:3.2}
Multiple little $q$-Jacobi polynomials were introduced in \cite{PoVA}, where two kinds were given. Here we only deal with the multiple
little $q$-Jacobi polynomials of the first kind and we will use a different normalization. Recall that the little $q$-Jacobi polynomials
are given by
\begin{equation}   \label{qJac}
    q_n(x;a,b|q) = {}_2\phi_1 \left( \left. \begin{array}{c} q^{-n} , abq^{n+1} \\ aq \end{array} \right| q, qx \right) =
   \sum_{k=0}^n \frac{(q^{-n};q)_k (abq^{n+1};q)_k}{(q;q)_k (aq;q)_k} q^kx^k,
\end{equation} 
where $a=q^\alpha$ and $b = q^\beta$. They are given by the Rodrigues formula
\begin{equation}   \label{qJacRod}
    \frac{(qx;q)_\infty}{(bqx;q)_\infty} x^{\alpha} q_n(x;a,b|q) = C_n(a,b)  D_p^n x^{n+\alpha} \frac{(qx;q)_\infty}{(bq^{n+1}x;q)_\infty} , 
\end{equation}
where
\begin{equation}  \label{Cnab}
   C_n(a,b) = C_n(a) = \frac{q^{\binom{n}{2}}a^n(1-q)^n}{(aq;q)_n}.
\end{equation}
These little $q$-Jacobi polynomials have the orthogonality relations
\[     \sum_{k=0}^\infty  q_n(q^k;a,b|q) q_m(q^k;a,b|q) q^k \frac{(bq;q)_k}{(q;q)_k} a^k = 0, \qquad n \neq m. \]
Observe that for $b=0$ we retrieve the little $q$-Laguerre polynomials.

Multiple little $q$-Jacobi polynomials (of the first kind) $q_{\vec{n}}(x;\vec{a},b|q)$ are obtained by changing the parameter $a$ to a vector 
$\vec{a}=(a_1,\ldots,a_r)$.
If $\vec{n} = (n_1,n_2,\ldots,n_r)$ the orthogonality relations then become
\begin{equation}   \label{mqJacorth}
   \sum_{k=0}^\infty q_{\vec{n}}(q^k;\vec{a},b|q) q^{k\ell} q^k \frac{(bq;q)_k}{(q;q)_k} a_j^k = 0, \qquad 0 \leq  \ell \leq n_j-1, 
\end{equation}
for $1 \leq j \leq r$. One needs the condition $\alpha_i-\alpha_j \not\in \mathbb{Z}$ in order that these orthogonality relations
determine the multiple orthogonal polynomials in a unique way. In \cite[Thm. 2.2]{PoVA} a Rodrigues formula was given
\begin{equation}   \label{mqJacRod}
   \frac{(qx;q)_\infty}{(bqx;q)_\infty} q_{\vec{n}}(x;\vec{a},b|q)
   = C_{\vec{n}}(\vec{a},b) \prod_{j=1}^r \left( x^{-\alpha_j} D_p^{n_j} x^{n_j+\alpha_j} \right)  \frac{(qx;q)_\infty}{(bq^{|\vec{n}|+1}x;q)_\infty},
\end{equation}
where $C_{\vec{n}}(\vec{a},b)$ is a constant that determines the normalization. An expression in terms of a generalized basic hypergeometric
function was given in \cite[Eq. (2.7)]{PoVA}
\[   \frac{(qx;q)_\infty}{(bqx;q)_\infty} q_{\vec{n}}(x;\vec{a},b|q) = {}_{r+1}\phi_r \left( 
    \left. \begin{array}{c}   q^{-|\vec{n}|}/b, a_1q^{n_1+1}, \ldots, a_rq^{n_r+1} \\
                                  a_1q, a_2 q, \ldots , a_rq   \end{array}  \right| q , bqx \right) .  \]
Here we used the normalization so that $q_{\vec{n}}(0;\vec{a},b|q) = 1$, which is different from the normalization in \cite{PoVA}.
Note that the limit $b \to 0$ gives the multiple little $q$-Laguerre polynomials $q_{\vec{n}}(x;\vec{a}|q)$, so that we get the
generalized hypergeometric representation
\[    (qx;q)_\infty q_{\vec{n}}(x;\vec{a}|q) = {}_r\phi_r \left( 
    \left. \begin{array}{c}   a_1q^{n_1+1}, \ldots, a_rq^{n_r+1} \\
                               a_1q, a_2 q, \ldots , a_rq   \end{array}  \right| q , q^{-|\vec{n}|+1}x \right) .  \]

An expression for the multiple little $q$-Jacobi polynomials in terms of a finite $r$-fold sum is given by
\begin{theorem}   \label{thm:mqJac}
If we take the normalizing factor in \eqref{mqJacRod} as $C_{\vec{n}}(\vec{a};b)=C_{\vec{n}}(a)$ as in \eqref{Cn},
then an explicit expression for the multiple little $q$-Jacobi polynomials (of the first kind) is given by
\begin{multline} \label{mqJac}
   q_{\vec{n}}(x;\vec{a},b|q) 
  = \sum_{k_1=0}^{n_1} \cdots \sum_{k_r=0}^{n_r}  \prod_{j=1}^r \frac{(q^{-n_j};q)_{k_j}(a_jbq^{\sum_{i=1}^j n_i+1};q)_{\sum_{i=j}^r k_i}}
                  {(q;q)_{k_j}(a_jq;q)_{\sum_{i=j}^r k_i}} \\
   \times  \prod_{j=1}^{r-1} \frac{(a_jq^{n_j+1};q)_{k_{j+1}+\cdots+k_r}}{(a_jbq^{\sum_{i=1}^j n_i+1};q)_{k_{j+1}+\cdots+k_r}}
       \frac{(qx)^{|\vec{k}|}}{q^{\sum_{j=1}^r n_j \sum_{i=j+1}^r k_i}} .
\end{multline}
These multiple little $q$-Jacobi polynomials are normalized so that $q_{\vec{n}}(0;\vec{a},b|q)=1$.
\end{theorem}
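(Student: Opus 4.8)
The plan is to prove Theorem~\ref{thm:mqJac} by induction on $r$, mirroring exactly the structure of the proof of Theorem~\ref{thm:3.1} for the multiple little $q$-Laguerre case, since the Rodrigues formula \eqref{mqJacRod} has the same product-of-commuting-operators form. For the base case $r=1$ the explicit sum \eqref{mqJac} must collapse to the single-variable expression \eqref{qJac}: when $r=1$ the inner product $\prod_{j=1}^{r-1}$ is empty, the index sums $\sum_{i=j}^r k_i$ and $\sum_{i=1}^j n_i$ reduce to $k_1$ and $n_1$, and one checks that the factor $(a_1bq^{n_1+1};q)_{k_1}/(a_1q;q)_{k_1}$ together with $(q^{-n_1};q)_{k_1}/(q;q)_{k_1}$ reproduces the ${}_2\phi_1$ coefficients, using $C_n(a,b)=C_n(a)$ from \eqref{Cnab}.

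For the inductive step, I would split off the $j=1$ operator exactly as in the Laguerre proof: write
\[
   \frac{(qx;q)_\infty}{(bqx;q)_\infty} q_{\vec{n}}(x;\vec{a},b|q)
   = C_{\vec{n}}(\vec{a},b)\, x^{-\alpha_1} D_p^{n_1} x^{n_1+\alpha_1}
     \prod_{j=2}^r \left( x^{-\alpha_j} D_p^{n_j} x^{n_j+\alpha_j} \right)
     \frac{(qx;q)_\infty}{(bq^{|\vec{n}|+1}x;q)_\infty},
\]
and recognize the action of the product $\prod_{j=2}^r$ via the induction hypothesis applied to a shifted multi-index and parameter vector. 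The key technical point is bookkeeping the $b$-dependence: unlike the Laguerre case, the trailing factor is $(qx;q)_\infty/(bq^{|\vec{n}|+1}x;q)_\infty$ rather than $(qx;q)_\infty$, so when peeling off the $j=1$ operator one must track how the exponent of $b$ shifts. After applying the $(r-1)$-fold inductive sum one is left with a single $p$-difference $x^{-\alpha_1} D_p^{n_1} x^{n_1+\alpha_1+k_2+\cdots+k_r}(qx;q)_\infty/(bq^{|\vec{n}|+1}x;q)_\infty$, which I would evaluate by the $r=1$ Rodrigues formula \eqref{qJacRod} for a little $q$-Jacobi polynomial with first parameter $a_1q^{k_2+\cdots+k_r}$ and the appropriate shifted second parameter $b$.

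The main obstacle will be the combinatorial reconciliation of the $q$-Pochhammer factors after substitution. After inserting the ${}_2\phi_1$ sum \eqref{qJac} for the inner $q_{n_1}$ and the explicit $C_n(a)$ from \eqref{Cr1}, the products over $j=2,\ldots,r$ carried by the induction hypothesis must recombine with the freshly produced $j=1$ factors to yield exactly the two products $\prod_{j=1}^r$ and $\prod_{j=1}^{r-1}$ appearing in \eqref{mqJac}. In particular one must verify that the ratios $(a_jbq^{\sum_{i=1}^j n_i+1};q)_{\sum_{i=j}^r k_i}$ and $(a_jbq^{\sum_{i=1}^j n_i+1};q)_{k_{j+1}+\cdots+k_r}$ telescope correctly when the summation range index $\sum_{i=j}^r k_i$ picks up the new variable $k_1$, using standard $q$-Pochhammer splitting identities such as $(x;q)_{m+n}=(x;q)_m (xq^m;q)_n$. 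This step is pure (if intricate) $q$-algebra, and the matching of the power of $q$ in the denominator $q^{\sum_j n_j \sum_{i>j} k_i}$ follows the same pattern already established in the Laguerre proof; I would carry it out for the shifted arguments and check that the normalization $C_{\vec{n}}(\vec{a},b)=C_{\vec{n}}(a)$ of \eqref{Cn} makes the leading prefactors cancel so that $q_{\vec{n}}(0;\vec{a},b|q)=1$ holds, which follows since only the $\vec{k}=\vec{0}$ term survives at $x=0$.
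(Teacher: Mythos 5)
Your proposal is correct and follows the paper's own proof essentially step for step: induction on $r$, peeling off the $j=1$ operator, recognizing $\prod_{j=2}^r$ acting on the trailing factor via the $(r-1)$-index Rodrigues formula, evaluating the remaining single $p$-difference by the $r=1$ Rodrigues formula \eqref{qJacRod}, and recombining the $q$-Pochhammer symbols with $(x;q)_{m+n}=(x;q)_m(xq^m;q)_n$ before fixing the normalization via \eqref{Cn}. One bookkeeping correction to the outline: matching the $(r-1)$-index Rodrigues formula to the trailing factor $(qx;q)_\infty/(bq^{|\vec{n}|+1}x;q)_\infty$ forces the inner multiple polynomial to be $q_{\vec{n}-n_1\vec{e}_1}(x;\vec{a}-a_1\vec{e}_1,bq^{n_1}|q)$, i.e.\ the shift $b\mapsto bq^{n_1}$ is consumed at that stage, so what is left inside $x^{-\alpha_1}D_p^{n_1}x^{n_1+\alpha_1+k_2+\cdots+k_r}(\cdot)$ is $(qx;q)_\infty/(bq^{n_1+1}x;q)_\infty$, not $(qx;q)_\infty/(bq^{|\vec{n}|+1}x;q)_\infty$, and the little $q$-Jacobi polynomial produced at the last step is $q_{n_1}(x;a_1q^{k_2+\cdots+k_r},b|q)$ with second parameter exactly $b$ (getting this right is what yields the factor $(a_1bq^{n_1+1};q)_{k_1+\cdots+k_r}$ in \eqref{mqJac} rather than a wrongly shifted one).
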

Note that for $b=0$ we get the expression \eqref{mqLag} in Theorem \ref{thm:3.1}.

\begin{proof}
The proof is again by induction on $r$. For $r=1$ one has the usual little $q$-Jacobi polynomials in \eqref{qJac} with $C_n(a,b)$ given
in \eqref{Cnab}. Observe that this normalizing factor is independent of $b$.

Suppose that the result holds for $r-1$. Then the Rodrigues formula \eqref{mqJacRod} gives
\begin{eqnarray*}
  \frac{(qx;q)_\infty}{(bqx;q)_\infty} q_{\vec{n}}(x;\vec{a},b|q) & = & C_{\vec{n}}(\vec{a},b) x^{-\alpha_1} D_p^{n_1} x^{n_1+\alpha_1}
   \prod_{j=2}^r  x^{-\alpha_j} D_p^{n_j} x^{n_j+\alpha_j} \frac{(qx;q)_\infty}{(bq^{|\vec{n}|+1};q)_\infty} \\
  & = & \frac{C_{\vec{n}}(\vec{a},b)}{C_{\vec{n}-n_1\vec{e}_1}(\vec{a}-a_1\vec{a},b)}  x^{-\alpha_1} D_p^{n_1} x^{n_1+\alpha_1}
     q_{\vec{n}-n_1\vec{e}_1}(x;\vec{a}-a_1\vec{e}_1;bq^{n_1}|q),
\end{eqnarray*}
where we used the Rodrigues formula \eqref{mqJacRod} with $r-1$ for the product $\prod_{j=2}^r$. Now use the induction hypothesis
to express $q_{\vec{n}-n_1\vec{e}_1}(x;\vec{a}-a_1\vec{e}_1,bq^{n_1}|q)$ as an $(r-1)$-fold sum to find
\begin{multline*}   q_{\vec{n}}(x;\vec{a},b|q) = \frac{C_{\vec{n}}(\vec{a},b)}{C_{\vec{n}-n_1\vec{e}_1}(\vec{a}-a_1\vec{a},b)} 
   \sum_{k_2=0}^{n_2} \cdots \sum_{k_r=0}^{n_r} \prod_{j=2}^r \frac{(q^{-n_j};q)_{k_j}(a_jbq^{\sum_{i=1}^j n_i + 1};q)_{\sum_{i=j}^r k_i}}
         {(q;q)_{k_j} (aq;q)_{\sum_{i=j}^r k_i}} \\
        \times\ \prod_{j=2}^{r-1} \frac{(a_jq^{n_j+1};q)_{\sum_{i=j+1}^r k_i}}{(a_jbq^{\sum_{i=1}^j n_i+1};q)_{\sum_{i=j+1}^r k_i}}
         \frac{q^{k_2+\cdots+k_r}}{q^{\sum_{j=2}^r n_j \sum_{i=j+1}^r k_i}} x^{-\alpha_1} D_p^{n_1} x^{n_1+\alpha_1+k_2+\cdots+k_r}
     \frac{(qx;q)_\infty}{(bq^{n_1+1};q)_\infty} .
\end{multline*}
Now use the Rodrigues formula \eqref{qJacRod} for $r=1$ to find
\begin{multline*}
    D_p^{n_1} x^{n_1+\alpha_1+k_2+\cdots+k_r}    \frac{(qx;q)_\infty}{(bq^{n_1+1};q)_\infty} \\
     = \frac{1}{C_{n_1}(a_1q^{k_2+\cdots+k_r},bq^{n_1})} \frac{(qx;q)_\infty}{(bqx;q)_\infty} x^{\alpha_1+k_2+\cdots+k_r} 
q_{n_1}(x;aq^{k_2+\cdots+k_r},b|q). 
\end{multline*}
Then use the sum \eqref{qJac} and \eqref{Cnab} to find  
\begin{multline*}
  q_{\vec{n}}(x;\vec{a},b|q)  = \frac{C_{\vec{n}}(\vec{a},b)}{q^{\binom{n_1}{2}}a_1^n(1-q)^n C_{\vec{n}-n_1\vec{e}_1}(\vec{a}-a_1\vec{a},b)} \\
  \times  \sum_{k_1=0}^{n_1}\cdots \sum_{k_r=0}^{n_r} \prod_{j=1}^r \frac{(q^{-n_j};q)_{k_j}}{(q;q)_{k_j}}  
   \prod_{j=2}^r \frac{(a_jbq^{\sum_{i=1}^j n_i + 1};q)_{\sum_{i=j}^r k_i}}{(a_jq;q)_{\sum_{i=j}^r k_i}} \\
    \times \prod_{j=2}^{r-1} \frac{(a_jq^{n_j+1};q)_{\sum_{i=j+1}^r k_i}}{(a_jbq^{\sum_{i=1}^j n_i+1};q)_{\sum_{i=j+1}^r k_i}}
    \frac{(a_1q^{k_2+\cdots+k_r+1};q)_{n_1} (a_1bq^{k_2+\cdots+k_r+n_1+1};q)_{k_1}}{(a_1q^{k_2+\cdots+k_r+1};q)_{k_1}}      
 \frac{(qx)^{|\vec{k}|}}{q^{\sum_{j=1}^r n_j \sum_{i=j+1}^r k_i}} .
\end{multline*}
Now use
\begin{eqnarray*}
  (a_1q^{k_2+\cdots+k_r};q)_{n_1} &=& \frac{(a_1q;q)_{n_1} (a_1q^{n_1+1};q)_{k_2+\cdots+k_r}}{(a_1q;q)_{k_2+\cdots+k_r}}, \\
  (a_1q^{k_2+\cdots+k_r};q)_{k_1} &=& \frac{a_1q;q)_{k_1+\cdots+k_r}}{(a_1q;q)_{k_2+\cdots+k_r}}, \\
  (a_1bq^{k_2+\cdots+k_r+n_1+1};q)_{k_1} &=& \frac{a_1bq^{n_1+1};q)_{k_1+\cdots+k_r}}{(a_1bq^{n_1+1};q)_{k_2+\cdots+k_r}}
\end{eqnarray*}
to find the $r$-fold sum in \eqref{mqJac}. The normalization $q_{\vec{n}}(0;\vec{a},b|q)=1$ is obtained when
\[   \frac{C_{\vec{n}}(\vec{a},b)(a_1q;q)_{n_1}}{q^{\binom{n_1}{2}}a_1^n(1-q)^n C_{\vec{n}-n_1\vec{e}_1}(\vec{a}-a_1\vec{a},b)} = 1, \]
which holds for $C_{\vec{n}}(\vec{a},b) = C_{\vec{n}}(\vec{a})$ in \eqref{Cn}. Observe that this factor does not depend on~$b$. 
\end{proof}

\subsection{Multiple Al-Salam--Chihara polynomials}    \label{sec:3.3}
In this section we will take $r$ weights $(w_1,w_2,\ldots,w_r)$ on $[-1,1]$ with $w_r(\theta) = w(\theta;a_j,b|q)$, where
$w(\theta;a,b|q)$ is the Al-Salam--Chihara weight given in \eqref{AlSalChiw} and
$\vec{a}=(a_1,\ldots,a_r)$ such that $a_j = q^{\alpha_j}$ and $\alpha_i - \alpha_j \not\in \mathbb{Z}$ whenever $i \neq j$. As usual with the
Al-Salam--Chihara weight, we take $|a_j| < 1$ and $|b|<1$. The corresponding multiple orthogonal polynomials can then be obtained
from the little $q$-Laguerre polynomials by using Proposition \ref{prop:3.3} and the transformation $T_b$ given in Theorem \ref{thm:2.2}.
In fact Theorem \ref{thm:2.1} can be extended to multiple orthogonal polynomials as follows.

\begin{theorem}   \label{thm:3.3}
The multiple Al-Salam--Chihara polynomials $p_{\vec{n}}(x;\vec{a},b|q)$ for the weights 
$w(\theta;a_1,b|q),w(\theta;a_2,b|q),\ldots,w(\theta;a_r,b|q)$, with $w(\theta;a,b|q)$ given in \eqref{AlSalChiw}, are given by
$p_{\vec{n}}(x;\vec{a},b|q) = T_b q_{\vec{n}}(x;\frac{b}{q}\vec{a}|q)$, where $T_b$ is the linear transformation given in Theorem \ref{thm:2.2}
and $q_{\vec{n}}(x;\frac{b}{q}\vec{a}|q)$ are the multiple little $q$-Laguerre polynomials given in Section \ref{sec:3.1}.
An explicit expression is given by
\begin{multline*}
 p_{\vec{n}}(x;\vec{a},b|q) = \sum_{k_1=0}^{n_1} \cdots \sum_{k_r=0}^{n_r} 
 \prod_{j=1}^r \frac{(q^{-n_j};q)_{k_j}}{(q;q)_{k_j}(a_jb;q)_{k_j}}  
 \prod_{j=1}^{r-1} \frac{(a_jbq^{n_j};q)_{k_{j+1}+\cdots+k_r}}{(a_jbq^{k_j};q)_{k_{j+1}+\cdots+k_r}} \\
     \times  \frac{q^{|\vec{k}|}}{q^{\sum_{j=1}^r n_j \sum_{i=j+1}^r k_i}} (be^{i\theta},be^{-i\theta};q)_{|\vec{k}|} .
\end{multline*}
\end{theorem}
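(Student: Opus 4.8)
The plan is to establish the identity $p_{\vec{n}}(x;\vec{a},b|q) = T_b\, q_{\vec{n}}(x;\tfrac{b}{q}\vec{a}|q)$ first, and then derive the explicit $r$-fold sum by applying $T_b$ term-by-term to the closed form of the multiple little $q$-Laguerre polynomials from Theorem \ref{thm:3.1}. For the defining identity, I would mimic the argument of Theorem \ref{thm:2.1} at the level of multi-indices. The modified moments $m_{k,j}$ for the Al-Salam--Chihara weight $w(\theta;a,b|q)$ depend only on the product $ab$ and on $k+j$, namely $m_{k,j}=c_{k+j}$ with $c_n=(ab;q)_n/[(q;q)_\infty(ab;q)_\infty]$, as computed in Section \ref{sec:AlSalChi}. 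Replacing the single parameter $a$ by the vector $\vec{a}=(a_1,\dots,a_r)$ and keeping $b$ fixed, the transformation of interest is $T_b$, and the relevant moment sequences for the $r$ weights are those of the measures $\mu_j$ attached to the little $q$-Laguerre parameters $a_j b/q$. The multiple orthogonality conditions \eqref{AlSalChiw}-type for $p_{\vec n}$ against $(be^{i\theta},be^{-i\theta};q)_\ell$ translate, under $T_b$, exactly into the discrete multiple orthogonality relations \eqref{mqLagorth} for $q_{\vec n}(x;\tfrac{b}{q}\vec a|q)$; this is the multi-index analogue of the corollary following Proposition \ref{prop:3.3}.

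The cleanest route for the first half is to invoke the isometry of Proposition \ref{prop:3.3} directly. Since $T_b$ sends the discrete inner product $\langle\cdot,\cdot\rangle_{\textup{dis}(a_j b)}$ (for each $j$) to the continuous Al-Salam--Chihara inner product, and since $(be^{i\theta},be^{-i\theta};q)_\ell = T_b(x^\ell)$ by the very definition of $T_b$, orthogonality of $q_{\vec n}(x;\tfrac{b}{q}\vec a|q)$ against the monomials $x^\ell$ ($0\le\ell\le n_j-1$) with respect to each $\langle\cdot,\cdot\rangle_{\textup{dis}(a_j b)}$ is carried to orthogonality of $T_b\,q_{\vec n}$ against $(be^{i\theta},be^{-i\theta};q)_\ell$ with respect to each of the $r$ Al-Salam--Chihara weights $w(\theta;a_j,b|q)$. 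Because $T_b$ preserves degree (it maps a polynomial of degree $|\vec n|$ in $x$ to a polynomial of degree $|\vec n|$ in $\cos\theta$), $T_b\,q_{\vec n}$ is a polynomial of the correct degree satisfying all the required orthogonality conditions, and normalization at $x=0$ (equivalently $\theta$ with $e^{i\theta}=1$) is preserved since $T_b(1)=1$. Uniqueness of the multiple orthogonal polynomial for a normal index then forces $p_{\vec n}(x;\vec a,b|q)=T_b\,q_{\vec n}(x;\tfrac{b}{q}\vec a|q)$.

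For the explicit formula, I would substitute the closed form \eqref{mqLag} for $q_{\vec n}(x;\tfrac{b}{q}\vec a|q)$ and apply $T_b$ to each monomial $(qx)^{|\vec k|}$, using $T_b(x^{|\vec k|}) = (be^{i\theta},be^{-i\theta};q)_{|\vec k|}$ from Theorem \ref{thm:2.2}. The substitution $a_j \mapsto a_j b/q$ inside \eqref{mqLag} sends each factor $(a_j q;q)_{k_j}$ to $(a_j b;q)_{k_j}$, each $(a_j q^{n_j+1};q)_{k_{j+1}+\cdots+k_r}$ to $(a_j b q^{n_j};q)_{k_{j+1}+\cdots+k_r}$, and each $(a_j q^{k_j+1};q)_{k_{j+1}+\cdots+k_r}$ to $(a_j b q^{k_j};q)_{k_{j+1}+\cdots+k_r}$; the power $(qx)^{|\vec k|}$ becomes $q^{|\vec k|}(be^{i\theta},be^{-i\theta};q)_{|\vec k|}$ after applying $T_b$, which matches the stated expression exactly. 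I expect the main obstacle to be bookkeeping rather than conceptual: one must verify that the normal-index hypothesis (guaranteed by $\alpha_i-\alpha_j\notin\mathbb{Z}$) holds so that uniqueness applies, and that the parameter shift $a_j\mapsto a_jb/q$ is carried consistently through every $q$-Pochhammer factor of \eqref{mqLag}. A minor subtlety is confirming that the discrete measures $\mu_j$ underlying the $r$ weights are precisely the little $q$-Laguerre measures with parameter $a_jb/q$, which follows from the same $q$-binomial computation as in the single-index case but applied to each parameter $a_j$ in turn.
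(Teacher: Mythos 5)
Your overall strategy coincides with the paper's: apply $T_b$ to the multiple little $q$-Laguerre polynomials with parameters $a_jb/q$, transfer their discrete multiple orthogonality \eqref{mqLagorth} to the continuous side via Proposition \ref{prop:3.3}, and get the explicit $r$-fold sum by applying $T_b$ termwise to \eqref{mqLag}. The second half of your argument (the parameter substitution $a_j\mapsto a_jb/q$ in \eqref{mqLag} and the replacement $(qx)^{|\vec k|}\mapsto q^{|\vec k|}(be^{i\theta},be^{-i\theta};q)_{|\vec k|}$) is correct and is exactly what the paper does.

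However, the orthogonality-transfer step is misapplied as written. Proposition \ref{prop:3.3} is not an isometry of the single map $T_b$; it pairs two \emph{different} transforms, one for each parameter of the weight: for $w(\theta;a_j,b|q)$ it reads $\langle T_{a_j}f, T_b g\rangle_{\textup{cont}(a_j,b)} = \langle f,g\rangle_{\textup{dis}(a_jb)}$. You claim instead that discrete orthogonality of $q_{\vec n}$ against $x^\ell$ is carried to continuous orthogonality of $T_b q_{\vec n}$ against $T_b x^\ell=(be^{i\theta},be^{-i\theta};q)_\ell$; that would require a formula for $\langle T_b f, T_b g\rangle_{\textup{cont}(a_j,b)}$, which Proposition \ref{prop:3.3} does not provide (putting $T_b$ on both arguments forces the weight $w(\theta;b,b|q)$, and for the weight $w(\theta;a_j,b|q)$ the product $(be^{i\theta},be^{-i\theta};q)_\infty^2\, w(\theta;a_j,b|q)$ is not the continuous $q$-Hermite weight, so the double sum in the proof of the proposition does not collapse). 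The correct application — and what the paper does — is to test against $T_{a_j}x^\ell=(a_je^{i\theta},a_je^{-i\theta};q)_\ell$ for $0\le\ell\le n_j-1$, giving $\langle T_{a_j}x^\ell, T_b q_{\vec n}\rangle_{\textup{cont}(a_j,b)}=\langle x^\ell, q_{\vec n}\rangle_{\textup{dis}(a_jb)}=0$; since these functions span the polynomials of degree at most $n_j-1$, the multiple orthogonality conditions follow. So your conclusion stands, but only after switching the test functions. (Two minor slips: $x=0$ corresponds to $e^{i\theta}=\pm i$, not $e^{i\theta}=1$, and $T_b(1)=1$ does not mean $T_b$ preserves values at a point; since the multiple orthogonal polynomial for a normal index is determined only up to a constant factor, this does not harm the argument, but the normalization claim as stated is unjustified.)
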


\begin{proof}
Let $T_b$ be the linear tarnsformation that acts on polynomials like
\[   T_b x^k = (be^{i\theta},be^{-i\theta};q)_k,  \]
then we apply this to the multiple little $q$-Laguerre polynomials with parameters 
$\frac{b}{q} \vec{a} = \left( \displaystyle \frac{a_1b}{q}, \ldots, \frac{a_rb}{q} \right)$ to find $T_b q_{\vec{n}}(x;\frac{b}{q} \vec{a}|q)$.
If we use Proposition \ref{prop:3.3} then
\[  \langle T_{a_j} x^\ell , T_b q_{\vec{n}}(x;\frac{b}{q}\vec{a}|q) \rangle_{\textup{cont}(a_j,b)}
     = \langle x^\ell , q_{\vec{n}}(x;\frac{b}{q} \vec{a} |q) \rangle_{\textup{dis}(a_jb)}, \]
which is
\begin{multline*}
  \frac{1}{2\pi} \int_{-1}^1 (a_je^{i\theta},a_je^{-i\theta};q)_\ell T_b q_{\vec{n}}(x;\frac{b}{q}\vec{a}|q) w(\theta;a_j,b|q)\, \frac{dx}{\sqrt{1-x^2}}
    \\
  = \frac{1}{(q;q)_\infty} \sum_{k=0}^\infty q^{k\ell} q_{\vec{n}}(q^k;\frac{b}{q} \vec{a}|q) \frac{(a_jb)^k}{(q;q)_k}. 
\end{multline*}
The latter sum is $0$ because of \eqref{mqLagorth} when $0 \leq \ell \leq n_j-1$ for $1 \leq j \leq r$. This shows that
$T_b q_{\vec{n}}(x;\frac{b}{q} \vec{a}|q)$ satisfies the multiple orthogonality conditions with respect to the $r$ Al-Salam--Chihara weights
$w(\theta;a_1,b|q),w(\theta;a_2,b|q),\ldots,w(\theta;a_r,b|q)$.
The explicit expression as an $r$-fold sum is obtained by applying $T_b$ to the sum \eqref{mqLag} in Theorem \ref{thm:3.1} for the parameters $ba_j/q$.
\end{proof} 

\subsection{Multiple continuous dual $q$-Hahn polynomials}    \label{sec:3.4}
We now take $r$ weights $(w_1,\ldots,w_r)$ on $[-1,1]$ by using the continuous dual $q$-Hahn weight $w(\theta;a_j,b,c|q)$ of \eqref{qHahnw}
with $r$ different parameters $\vec{a}=(a_1,\ldots,a_r)$, keeping $b,c$ fixed. Again we let $a_j=q^{\alpha_j}$ and assume that 
$\alpha_i - \alpha_j \not\in \mathbb{Z}$ whenever $i \neq j$. This ensures that the multiple orthogonality conditions
\[  \int_{-1}^1 p_{\vec{n}}(x;\vec{a},b,c|q) x^\ell w(\theta;a_j,b,c|q)\, \frac{dx}{\sqrt{1-x^2}} = 0, \qquad 0 \leq \ell \leq n_j-1, \]
for $1 \leq j \leq r$, give $|\vec{n}|$ equations that determine the $p_{\vec{n}}(x;\vec{a},b,c|q)$ uniquely (up to a multiplicative factor).
These multiple continuous dual $q$-Hahn polynomials can be obtained by using the linear transformation $T_{b,c}$ given in Theorem \ref{thm:2.8}
to the multiple little $q$-Laguerre polynomials. The extension of Theorem \ref{thm:2.7} to multiple continuous dual $q$-Hahn polynomials is:

\begin{theorem}   \label{thm:3.4}
The multiple continuous dual $q$-Hahn polynomials $p_{\vec{n}}(x;\vec{a},b,c|q)$ for the weights 
$w(\theta;a_1,b,c|q),w(\theta;a_2,b,c|q),\ldots,w(\theta;a_r,b,c|q)$, with $w(\theta;a,b,c|q)$ given in \eqref{qHahnw}, are given by
$p_{\vec{n}}(x;\vec{a},b,c|q) = T_{b,c} q_{\vec{n}}(x;\frac{b}{q}\vec{a}|q)$, where $T_{b,c}$ is the linear transformation given in Theorem \ref{thm:2.8}
and $q_{\vec{n}}(x;\frac{b}{q}\vec{a}|q)$ are the multiple little $q$-Laguerre polynomials given in Section \ref{sec:3.1}.
An explicit expression is given by
\begin{multline*}
 p_{\vec{n}}(x;\vec{a},b,c|q) = \sum_{k_1=0}^{n_1} \cdots \sum_{k_r=0}^{n_r} 
 \prod_{j=1}^r \frac{(q^{-n_j};q)_{k_j}}{(q;q)_{k_j}(a_jb;q)_{k_j}}  
 \prod_{j=1}^{r-1} \frac{(a_jbq^{n_j};q)_{k_{j+1}+\cdots+k_r}}{(a_jbq^{k_j};q)_{k_{j+1}+\cdots+k_r}} \\
     \times  \frac{q^{|\vec{k}|}}{q^{\sum_{j=1}^r n_j \sum_{i=j+1}^r k_i}} \frac{(be^{i\theta},be^{-i\theta};q)_{|\vec{k}|}}{(bc;q)_{|\vec{k}|}} .
\end{multline*}
\end{theorem}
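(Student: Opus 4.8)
The plan is to follow exactly the route used for the multiple Al-Salam--Chihara polynomials in Theorem~\ref{thm:3.3}, but with the transformation $T_b$ replaced by $T_{b,c}$ from Theorem~\ref{thm:2.8} and the isometry of Proposition~\ref{prop:3.3} replaced by the Plancherel-type identity of Proposition~\ref{prop:2.9}. Concretely, I take $p_{\vec{n}}(x;\vec{a},b,c|q) = T_{b,c}\,q_{\vec{n}}(x;\tfrac{b}{q}\vec{a}|q)$, where $q_{\vec{n}}(x;\tfrac{b}{q}\vec{a}|q)$ is the multiple little $q$-Laguerre polynomial of Section~\ref{sec:3.1} for the shifted parameter vector $\tfrac{b}{q}\vec{a}=(a_1b/q,\ldots,a_rb/q)$. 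Since $q_{\vec{n}}$ has degree $|\vec{n}|$ and $T_{b,c}x^k=(be^{i\theta},be^{-i\theta};q)_k/(bc;q)_k$ is a polynomial of degree exactly $k$ in $x$, the image $T_{b,c}q_{\vec{n}}$ is again a polynomial of degree $|\vec{n}|$, so it is a genuine candidate, fixed up to a multiplicative constant once the orthogonality conditions are imposed.

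To verify the orthogonality I would first note that testing $p_{\vec{n}}$ against $x^\ell$ for $0\le\ell\le n_j-1$ is equivalent to testing it against $T_{a_j,c}x^\ell=(a_je^{i\theta},a_je^{-i\theta};q)_\ell/(a_jc;q)_\ell$, because these are, up to the nonzero constants $(a_jc;q)_\ell$, the degree-$\ell$ polynomials $A_\ell(x)$ of \eqref{Ak}, and $\{A_0,\ldots,A_{n_j-1}\}$ is a basis of the polynomials of degree at most $n_j-1$. Applying Proposition~\ref{prop:2.9} with the common parameter $c$, first parameter $a_j$ and second parameter $b$, to $f=x^\ell$ and $g=q_{\vec{n}}(x;\tfrac{b}{q}\vec{a}|q)$ then converts the continuous inner product $\langle T_{a_j,c}x^\ell,\,T_{b,c}q_{\vec{n}}\rangle_{\textup{cont}(a_j,b,c)}$ into the discrete inner product $\langle x^\ell,q_{\vec{n}}(x;\tfrac{b}{q}\vec{a}|q)\rangle_{\textup{dis}(a_jb)}$, that is,
\[
   \frac{1}{(q;q)_\infty}\sum_{k=0}^\infty q^{k\ell}\,q_{\vec{n}}(q^k;\tfrac{b}{q}\vec{a}|q)\,\frac{(a_jb)^k}{(q;q)_k}.
\]
Under the shift $a_j\mapsto a_jb/q$ the weight factor $(a_jq)^k$ of the multiple little $q$-Laguerre orthogonality \eqref{mqLagorth} becomes precisely $(a_jb)^k$, so this sum is the left-hand side of \eqref{mqLagorth} and vanishes for $0\le\ell\le n_j-1$, $1\le j\le r$. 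This yields the full set of $|\vec{n}|$ orthogonality conditions.

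For the explicit $r$-fold sum I would apply $T_{b,c}$ term-by-term to the expression \eqref{mqLag} of Theorem~\ref{thm:3.1} evaluated at the parameters $a_jb/q$. The shift $a_j\mapsto a_jb/q$ sends each $(a_jq;q)_{k_j}$ to $(a_jb;q)_{k_j}$ and each ratio $(a_jq^{n_j+1};q)_\bullet/(a_jq^{k_j+1};q)_\bullet$ to $(a_jbq^{n_j};q)_\bullet/(a_jbq^{k_j};q)_\bullet$, while $T_{b,c}$ replaces $(qx)^{|\vec{k}|}=q^{|\vec{k}|}x^{|\vec{k}|}$ by $q^{|\vec{k}|}(be^{i\theta},be^{-i\theta};q)_{|\vec{k}|}/(bc;q)_{|\vec{k}|}$. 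Assembling these factors reproduces the stated formula; the only change from the multiple Al-Salam--Chihara expression of Theorem~\ref{thm:3.3} is the additional denominator $(bc;q)_{|\vec{k}|}$, which is exactly the contribution of $T_{b,c}$ over $T_b$.

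The individual manipulations are routine once this dictionary is set up, so the step demanding the most care is the parameter bookkeeping: I must make sure that the discrete measure produced by Proposition~\ref{prop:2.9} with parameter $a_jb$ matches term-for-term the $j$-th orthogonality weight $(a_jb)^k/(q;q)_k$ of the shifted multiple little $q$-Laguerre family, and that the shared-$c$ requirement of Proposition~\ref{prop:2.9} is respected when the first argument is specialized to $a_j$ while $b$ and $c$ stay fixed. Once these alignments are confirmed, \eqref{mqLagorth} applies verbatim and the theorem follows.
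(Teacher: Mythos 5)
Your proposal is correct and follows essentially the same route as the paper's own proof: invoke Proposition \ref{prop:2.9} to convert the continuous inner product against $T_{a_j,c}x^\ell$ into the discrete inner product $\langle x^\ell, q_{\vec{n}}(x;\tfrac{b}{q}\vec{a}|q)\rangle_{\textup{dis}(a_jb)}$, conclude vanishing from \eqref{mqLagorth}, and obtain the $r$-fold sum by applying $T_{b,c}$ to \eqref{mqLag} with shifted parameters. The only difference is that you make explicit two points the paper leaves implicit (that $T_{b,c}$ preserves degree, and that testing against $T_{a_j,c}x^\ell$ for $0\le\ell\le n_j-1$ is equivalent to testing against $x^\ell$ since these span the same space), which strengthens rather than changes the argument.
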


\begin{proof}
If we use Proposition \ref{prop:2.9} then
\[  \langle T_{a_j,c} x^\ell , T_{b,c} q_{\vec{n}}(x;\frac{b}{q} \vec{a} |q) \rangle_{\textup{cont}(a_j,b,c)} 
 = \langle x^\ell, q_{\vec{n}}(x;\frac{b}{q} \vec{a}|q) \rangle_{\textup{dis}(a_jb)}, \]
which is
\begin{multline*}
  \frac{(a_jc,bc;q)_\infty}{2\pi} \int_{-1}^1 \frac{(a_je^{i\theta},a_je^{-i\theta};q)_\ell}{(a_jc;q)_k} T_{b,c} q_{\vec{n}}(x;\frac{b}{q} \vec{a}|q)
   w(\theta;a_j,b,c|q) \, \frac{dx}{\sqrt{1-x^2}} \\
   = \frac{1}{(q;q)_\infty} \sum_{k=0}^\infty  q^{k\ell} q_{\vec{n}}(q^k;\frac{b}{q} \vec{a}|q) \frac{(a_jb)^k}{(q;q)_k}
\end{multline*}
and the latter sum is $0$ whenever $0 \leq \ell \leq n_j-1$ because of the multiple orthogonality conditions \eqref{mqLagorth} for the multiple little
$q$-Laguerre polynomials. This shows that $T_{b,c} q_{\vec{n}}(x;\frac{b}{q} \vec{a}|q)$ satisfies the multiple orthogonality conditions
with respect to the $r$ continuous dual $q$-Hahn weights $w(\theta;a_1,b,c|q), \ldots, w(\theta;a_r,b,c|q)$. The $r$-fold sum
is obtained by applying $T_{b,c}$ to the $r$-fold sum \eqref{mqLag} for the multiple little $q$-Laguerre polynomials.
\end{proof}
Observe that for $c=0$ one has the multiple Al-Salam--Chihara polynomials given in Theorem \ref{thm:3.3}. 

\subsection{Multiple Askey--Wilson polynomials}     \label{sec:3.5}
Finally we will obtain the multiple Askey--Wilson polynomials by extending Theorem \ref{thm:2.4}. We choose the $r$ weights
$(w_1,\ldots,w_r)$ on $[-1,1]$ by taking the Askey--Wilson weights $w(\theta;a_j,b,c,d|q)$ of \eqref{AWw} with $r$ different parameters 
$\vec{a}=(a_1,\ldots,a_r)$, keeping $b,c,d$ fixed. Of course we could have taken $r$ different parameters $(b_1,\ldots,b_r)$, keeping
$a,c,d$ fixed or $r$ different $c$ or $d$ parameters, but since the Askey--Wilson weight is symmetric in $(a,b,c,d)$ this would not give anything new.
Again we let $a_j=q^{\alpha_j}$ and assume that $\alpha_i - \alpha_j \not\in \mathbb{Z}$ whenever $i \neq j$ to ensure that the multiple orthogonality conditions
\[  \int_{-1}^1 p_{\vec{n}}(x;\vec{a},b,c,d|q) x^\ell w(\theta;a_j,b,c,d|q)\, \frac{dx}{\sqrt{1-x^2}} = 0, \qquad 0 \leq \ell \leq n_j-1, \]
for $1 \leq j \leq r$, give $|\vec{n}|$ equations that determine the $p_{\vec{n}}(x;\vec{a},b,c|q)$ uniquely (up to a multiplicative factor).

\begin{theorem}   \label{thm:3.5}
The multiple Askey--Wilson polynomials $p_{\vec{n}}(x;\vec{a},b,c,d|q)$ for the weights 
$w(\theta;a_1,b,c,d|q),w(\theta;a_2,b,c,d|q),\ldots,w(\theta;a_r,b,c,d|q)$, with $w(\theta;a,b,c,d|q)$ given in \eqref{AWw}, are given by
$p_{\vec{n}}(x;\vec{a},b,c,d|q) = T_{b,c,d} q_{\vec{n}}(x;\frac{b}{q}\vec{a},\frac{cd}{q}|q)$, where $T_{b,c,d}$ is the linear transformation given in Theorem \ref{thm:2.5} and $q_{\vec{n}}(x;\frac{b}{q}\vec{a},\frac{cd}{q}|q)$ are the multiple little $q$-Jacobi polynomials given in Section \ref{sec:3.2}.
An explicit expression is given by
\begin{multline*}
   p_{\vec{n}}(x;\vec{a},b,c,d|q) 
  = \sum_{k_1=0}^{n_1} \cdots \sum_{k_r=0}^{n_r}  \prod_{j=1}^r \frac{(q^{-n_j};q)_{k_j}(a_jbcdq^{\sum_{i=1}^j n_i-1};q)_{\sum_{i=j}^r k_i}}
                  {(q;q)_{k_j}(a_jb;q)_{\sum_{i=j}^r k_i}} \\
   \times  \prod_{j=1}^{r-1} \frac{(a_jbq^{n_j};q)_{k_{j+1}+\cdots+k_r}}{(a_jbcdq^{\sum_{i=1}^j n_i-1};q)_{k_{j+1}+\cdots+k_r}}
       \frac{q^{|\vec{k}|}}{q^{\sum_{j=1}^r n_j \sum_{i=j+1}^r k_i}} \frac{(be^{i\theta},be^{-i\theta};q)_k}{(bc;q)_k(bd;q)_k} .
\end{multline*}
\end{theorem}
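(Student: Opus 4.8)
The plan is to prove Theorem~\ref{thm:3.5} by following exactly the same strategy that established Theorems~\ref{thm:3.3} and~\ref{thm:3.4}, only now building on the multiple \emph{little $q$-Jacobi} polynomials of Section~\ref{sec:3.2} rather than the multiple little $q$-Laguerre polynomials. The statement has two parts: first that $p_{\vec{n}}(x;\vec{a},b,c,d|q) = T_{b,c,d}\, q_{\vec{n}}(x;\tfrac{b}{q}\vec{a},\tfrac{cd}{q}|q)$ satisfies the correct multiple orthogonality conditions, and second that applying $T_{b,c,d}$ to the explicit $r$-fold sum \eqref{mqJac} yields the displayed formula. I would treat these two parts separately, since the first is conceptual and the second is purely computational.

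For the orthogonality part, the key device is the Plancherel-type identity of Proposition~\ref{prop:2.6}, which relates the continuous inner product $\langle\cdot,\cdot\rangle_{\mathrm{cont}(a,b,c,d)}$ for the Askey--Wilson weight to the discrete inner product $\langle\cdot,\cdot\rangle_{\mathrm{dis}(ab,cd)}$. First I would test $T_{b,c,d}\, q_{\vec{n}}(x;\tfrac{b}{q}\vec{a},\tfrac{cd}{q}|q)$ against $T_{a_j,c,d}\,x^\ell$ in the inner product $\langle\cdot,\cdot\rangle_{\mathrm{cont}(a_j,b,c,d)}$, and invoke Proposition~\ref{prop:2.6} (with $a$ replaced by $a_j$) to convert this to $\langle x^\ell, q_{\vec{n}}(x;\tfrac{b}{q}\vec{a},\tfrac{cd}{q}|q)\rangle_{\mathrm{dis}(a_jb,cd)}$. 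Unwinding the discrete inner product, this becomes a sum of the form $\tfrac{1}{(q;q)_\infty}\sum_{k=0}^\infty q^{k\ell}\, q_{\vec{n}}(q^k;\tfrac{b}{q}\vec{a},\tfrac{cd}{q}|q)\,(a_jb)^k\tfrac{(cd;q)_k}{(q;q)_k}$. Matching the parameters against the multiple little $q$-Jacobi orthogonality \eqref{mqJacorth} --- with the vector parameter $\tfrac{b}{q}\vec{a}$ playing the role of $\vec{a}$ and $\tfrac{cd}{q}$ playing the role of $b$, so that the weight factor $a_jb=(a_jb/q)\cdot q$ and $(cd;q)_k=(\,(cd/q)q;q)_k$ align correctly --- this sum vanishes for $0\le\ell\le n_j-1$. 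Since the polynomials $T_{a_j,c,d}\,x^\ell$ span all polynomials of degree at most $\ell$, this establishes the required orthogonality against each of the $r$ weights $w(\theta;a_j,b,c,d|q)$.

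For the explicit formula, I would simply apply $T_{b,c,d}$ termwise to the $r$-fold sum \eqref{mqJac} for $q_{\vec{n}}(x;\tfrac{b}{q}\vec{a},\tfrac{cd}{q}|q)$, using that $T_{b,c,d}$ sends $(qx)^{|\vec{k}|}=q^{|\vec{k}|}x^{|\vec{k}|}$ to $q^{|\vec{k}|}\tfrac{(be^{i\theta},be^{-i\theta};q)_{|\vec{k}|}}{(bc;q)_{|\vec{k}|}(bd;q)_{|\vec{k}|}}$ by the defining action of $T_{b,c,d}$ in Theorem~\ref{thm:2.5}. The substitutions $a_j\mapsto a_jb/q$ and $b\mapsto cd/q$ in \eqref{mqJac} then have to be carried through every $q$-Pochhammer symbol: for instance $(a_jq;q)_{\sum k_i}$ becomes $(a_jb;q)_{\sum k_i}$, and the factor $(a_j b\, q^{\sum_{i\le j}n_i+1};q)$ in \eqref{mqJac} becomes $\bigl((a_jb/q)(cd/q)q^{\sum_{i\le j}n_i+1};q\bigr)=(a_jbcd\,q^{\sum_{i\le j}n_i-1};q)$, which is exactly the numerator appearing in the claimed expression.

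The main obstacle I anticipate is the bookkeeping in this last substitution step: one must verify that \emph{every} shifted argument in \eqref{mqJac} transforms consistently under the simultaneous replacements $\vec{a}\mapsto\tfrac{b}{q}\vec{a}$ and $b\mapsto\tfrac{cd}{q}$, in particular that the exponents of $q$ inside the Pochhammer symbols combine to give the $-1$ shifts (rather than $+1$) in $a_jbcd\,q^{\sum_{i=1}^j n_i-1}$ that distinguish the Askey--Wilson case. The conceptual orthogonality argument is essentially forced by Proposition~\ref{prop:2.6} and presents no difficulty; I would therefore state it concisely and devote the care to checking the parameter matching in \eqref{mqJacorth} and the index-by-index transformation of the hypergeometric sum, which is where a sign or a $q$-power error would most easily creep in.
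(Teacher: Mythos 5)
Your proposal is correct and follows essentially the same route as the paper's own proof: orthogonality is obtained from the Plancherel-type identity of Proposition \ref{prop:2.6}, matching parameters in \eqref{mqJacorth} via $\vec{a}\mapsto\tfrac{b}{q}\vec{a}$ and $b\mapsto\tfrac{cd}{q}$, and the explicit formula comes from applying $T_{b,c,d}$ termwise to the $r$-fold sum \eqref{mqJac} with those same substitutions. Your only addition is the explicit remark that the polynomials $T_{a_j,c,d}\,x^\ell$ span all polynomials of degree at most $\ell$, a point the paper leaves implicit.
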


\begin{proof}
Proposition \ref{prop:2.6} gives
\[  \langle T_{a_j,c,d} x^\ell , T_{b,c,d} q_{\vec{n}}(x;\frac{b}{q} \vec{a},\frac{cd}{q} |q) \rangle_{\textup{cont}(a_j,b,c,d)} 
 = \langle x^\ell, q_{\vec{n}}(x;\frac{b}{q} \vec{a},\frac{cd}{q}|q) \rangle_{\textup{dis}(a_jb,cd)}, \]
which is
\begin{multline*}
  \frac{(a_jc,a_jd,bc,bd,cd;q)_\infty}{2\pi} \int_{-1}^1 \frac{(a_je^{i\theta},a_je^{-i\theta};q)_\ell}{(a_jc,a_jd;q)_k} T_{b,c,d} 
   q_{\vec{n}}(x;\frac{b}{q}\vec{a},\frac{cd}{q}|q)
   w(\theta;a_j,b,c,d|q) \, \frac{dx}{\sqrt{1-x^2}} \\
   = \frac{1}{(q;q)_\infty} \sum_{k=0}^\infty  q^{k\ell} q_{\vec{n}}(q^k;\frac{b}{q}\vec{a},\frac{cd}{q}|q) (a_jb)^k \frac{(cd;q)_k}{(q;q)_k}
\end{multline*}
and the latter sum is $0$ whenever $0 \leq \ell \leq n_j-1$ because of the multiple orthogonality condition \eqref{mqJacorth} for the multiple little
$q$-Jacobi polynomials. This shows that $T_{b,c,d} q_{\vec{n}}(x;\frac{b}{q}\vec{a},\frac{cd}{q}|q)$ satisfies the multiple orthogonality conditions
with respect to the $r$ continuous dual Askey--Wilson weights $w(\theta;a_1,b,c,d|q), \ldots, w(\theta;a_r,b,c,d|q)$. The $r$-fold sum
is obtained by applying $T_{b,c,d}$ to the $r$-fold sum \eqref{mqJac} for the multiple little $q$-Jacobi polynomials, where we first
replace $b$ by $cd/q$ and then $a_j$ by $ba_j/q$.
\end{proof}
Observe that for $d=0$ one has the multiple continuous dual $q$-Hahn polynomials given in Theorem \ref{thm:3.4}, and for $c=d=0$ the multiple Al-Salam--Chihara polynomials given in Theorem \ref{thm:3.3}. 

\section{Concluding remarks}
We have given a number of new families of multiple orthogonal polynomials starting from the discrete multiple little $q$-Jacobi polynomials and 
working our way up to the multiple Askey--Wilson polynomials, with limiting cases the multiple continuous dual $q$-Hahn polynomials and
multiple Al-Salam--Chihara polynomials. These three new families are multiple orthogonal polynomials for an AT-system whenever 
$\alpha_i-\alpha_j \not\in  \mathbb{Z}$, see \cite[\S 23.1.2]{Ismail} or \cite[Ch. 4, \S 4]{NikiSor} for more on AT-systems.
For $r=2$ we see that the ratio $w_2/w_1$ of the two weights for multiple Askey--Wilson polynomials is
\[   \frac{w(\theta;a_2,b,c,d|q)}{w(\theta;a_1,b,c,d|q)} = \frac{(a_1e^{i\theta},a_1e^{-i\theta};q)_\infty}{(a_2e^{i\theta},a_2e^{-i\theta};q)_\infty}
    = \prod_{k=0}^\infty \frac{1+a_1^2q^{2k}-2a_1q^kx}{1+a_2^2q^{2k}-2a_2q^kx}, \]
which is a meromorphic function with poles at the points $x_k = (1+a_2^2q^{2k})/2a_2q^k$ and zeros at the points $y_k = (1+a_1^2q^{2k})/2a_1q^k$.
When $a_1,a_2>0$ these poles and zeros are in $(1,\infty)$. The multiple Askey--Wilson polynomials therefore behave very much like a Nikishin
system for which
\[  \frac{w_2(x)}{w_1(x)} = \int_a^b \frac{d\sigma(t)}{x-t}, \]
with $\sigma$ a positive measure and $[a,b] \cap [-1,1] = \emptyset$, in particular with $\sigma$ a discrete
measure supported on the poles $\{x_k, k=0,1,2,\ldots \}$, see e.g., \cite{AptLM}. The weights for multiple continuous dual $q$-Hahn polynomials
and multiple Al-Salam--Chihara polynomials have the same ratio, and hence the same meromorphic function. Hence these multiple orthogonal
polynomials may give some insight into the behavior of multiple orthogonal polynomials for a Nikishin system.
Various other properties of our new multiple orthogonal polynomials would be of interest as well, such as the nearest neighbor recurrence
relations and the Rodrigues formula.

\section*{Acknowledgements}
J.P. Nuwacu is supported by VLIR-UOS project Belgium/Burundi CUI UB ZIUS2018AP022 and W. Van Assche by FWO research project G.0C9819N 
and EOS project PRIMA 30889451.

\bigskip\bigskip

\noindent\parbox{3in}{Jean Paul Nuwacu \\ 
D\'epartement de Math\'ematiques \\
Universit\'e du Burundi \\
BP 2700 Bujumbura\\
Burundi \\
\texttt{jean-paul.nuwacu@ub.edu.bi}}
\bigskip

\noindent\parbox{3in}{Walter Van Assche \\
Department of Mathematics \\
KU Leuven \\
Celestijnenlaan 200B box 2400 \\ 
BE-3001 Leuven \\
Belgium \\
\texttt{walter.vanassche@kuleuven.be}}

\end{document}